\newcommand{\R}{\mathbb R}
\newcommand{\N}{\mathbb N}
\newcommand{\C}{\mathbb C}
\newcommand{\M}{\mathcal{M}\eta}
\newcommand{\Ml}{\mathcal{M}\left(\eta \circ f_\ell\right)}
\newtheorem{X}{X}[section]
\newtheorem{cor}[X]{Corollary}
\newtheorem{lem}[X]{Lemma}
\newtheorem{prop}[X]{Proposition}
\newtheorem{thm}[X]{Theorem}
\newtheorem{Hyp}{Hypothesis}
\theoremstyle{remark}
\theoremstyle{remark}
\begin{document}

\title{Disproving a weaker form of Hooley's conjecture}
\author{Mounir Hayani}
\date{}
\maketitle

\begin{abstract}
    Hooley conjectured that $G(x;q) \ll x\log q$, as soon as $q\to +\infty$, where $G(x;q)$ represents the variance of primes $p \leq x$ in arithmetic progressions modulo $q$, weighted by $\log p$. In this paper, we study $G_\eta(x;q)$, a function similar to $G(x;q)$, but including the weighting factor $\eta\left(\frac{p}{x}\right)$, which has a dampening effect on the values of $G_\eta$. Our study is motivated by the disproof of Hooley's conjecture by Fiorilli and Martin in the range $q \asymp \log \log x$. Even though this weighting factor dampens the values, we still prove that an estimation of the form $G_\eta(x;q) \ll x\log q$ is false in the same range.

\end{abstract}

\section{Introduction and statement of results}

The study of the variance of arithmetical sequences in progressions has experienced accelerated progress over the last few decades. We mention a few of the numerous significant works, for example, the research conducted by Liu \cite{Liu1,Liu2}, the studies by Perelli \cite{Perelli}, as well as the contributions of Harper and Soundararajan \cite{Harper}. In addition, among Hooley's 19 research papers on the subject, we highlight several notable ones \cite{Hooley1,Hooley2,Hooley3,Hooley4,Hooley5,Hooley6,Hooley}.

Denoting, for $x\geq q\geq 3$ the variance $$G(x;q):=\sum_{\substack{a=1\\ (a,q)=1}}^q\bigg|\sum_{\substack{p \leq x \\ p \equiv a (q)}}\log p -\frac{x}{\phi(q)}\bigg|^2$$
and the closely related $$V_\Lambda(x;q)=\sum_{\substack{a=1\\(a,q)=1}}^q \bigg| \sum_{\substack{n\leq x\\n\equiv a (q)}}\Lambda(n)-\frac{1}{\phi(q)}\sum_{\substack{n\leq x\\ (n,q)=1}}\Lambda(n)\bigg|^2 \, ,$$
Hooley conjectured in \cite{Hooley}*{page 217} that uniformly for $x\geq q \geq 3$, as $q\to +\infty$, we have: \begin{equation}\label{HConj}
    G(x;q) \ll x\log q\, .
\end{equation}
Friedlander and Goldston \cite{Friedlander} proved, under the Generalized Riemann Hypothesis and a stronger form of the first Hardy-Littlewood conjecture, that \eqref{HConj} holds uniformly for $x^{\frac{1}{2}+\varepsilon}\leq q \leq x$. Keating and Rudnick \cite{KeRud} studied similar variances in the case of function fields and proved that estimations similar to \eqref{HConj} hold in the wider range $ 1\leq q\leq x$. Fiorilli \cite{Fiorilli} conjectured, based on a probabilistic argument, that \eqref{HConj} should hold uniformly for $(\log \log x)^{1+\varepsilon}\leq q \leq x$ but does not hold in the range $ q\leq (\log \log x)^{1-\epsilon}$. The latter part of this conjecture was proved by Fiorilli and Martin \cite{FiMa}*{Theorem 1.1}, which states that \eqref{HConj} does not hold, and that it remains false when $G$ is replaced by $V_\Lambda$.\\
Moreover, Fiorilli and Martin \cite{FiMa}*{Theorem 1.5} considered a weaker form of Hooley's conjecture involving the $q$-average of the variances. They proved that as $Q\to +\infty$, estimates of the form $$\frac{1}{Q}\sum_{q\leq Q} V_\Lambda(x;q) \ll x\log Q \text{\quad and\quad  } \frac{1}{Q}\sum_{q\leq Q} V_\Lambda(x;q) \ll x\log Q \, , $$ do not hold in the range $\sqrt{ \log \log x}\geq Q \geq 3$. 


In this paper, we will study weighted variances that are closely related to \( V_\Lambda \) and \( G \). As we will see, adding a smooth weighting factor dampens the values of these variances\footnote{Littlewood proved, in 1914, that $\psi(x)-x=\Omega_{\pm}(\sqrt{x}\log \log \log x)$, where $\psi(x)$ is the classical prime counting function. Adding a weighting factor to \(\psi(x)\) makes the \(\log \log \log x\) term disappears. }. Consequently, the behavior of these weighted variances suggests another weaker form of Hooley's conjecture, as they represent a further refinement in the examination of primes in arithmetic progressions.

Let $\mathcal{S}$ denote the class of all functions $\eta: \R_{>0}\to \R_{\geq 0}$ that are twice continuously differentiable and satisfy : \begin{equation}\label{support} \left \{ \begin{array}{ll}
     & \eta(1)>0, \\
     & \eta^{(k)}(t) \ll_j \frac{1}{t^j} \text{ and } \eta^{(k)}(t)\ll t^{1-k} \ (t\in \R_{>0},\  k\in \{0,1,2\}, j\in \N)\, ,
\end{array}
\right. \end{equation}
where $\eta^{(k)}$ is the $k$-th derivative of $\eta$.\\
We can now define our weighted variances. For $x\geq q\geq 3$ and $\eta \in \mathcal{S}$ denote 
\begin{equation}
    G_\eta(x;q)=\frac{1}{\phi(q)}\bigg|\sum_{\substack{p\geq 1\\ p\equiv a(a) }}\eta\left(\frac{p}{x}\right)\log p -\frac{\widehat{\eta}(0)}{\phi(q)}x \bigg|^2\, ,
\end{equation}
and
\begin{equation}
    V_\eta(x;q)=\sum_{\substack{a=1\\(a,q)=1}}^q \bigg| \sum_{\substack{n\geq1\\n\equiv a (q)}}\Lambda(n)\eta\left(\frac{n}{x}\right)-\frac{1}{\phi(q)}\sum_{\substack{n\geq1\\ (n,q)=1}}\Lambda(n)\eta\left(\frac{n}{x}\right) \bigg|^2\, .
\end{equation}

We consider in this paper the following natural question: can one prove similar results on $V_\eta$ and $G_\eta$ as those established in \cite{FiMa} for $V_\Lambda$ and $G$ ? Specifically, can we still prove that estimates of the form \begin{equation}\label{HConjAn}
    V_\eta(x;q) \ll x\log q\text{ and } G_\eta(x;q) \ll x \log q
\end{equation}
do not hold when $q\asymp \log \log x$ ?\\
Our first main result shows that this is indeed the case.
\begin{thm}\label{Hooley-False}
   Let $\eta \in \mathcal{S}$. For all $M>0$ there exist sequences $(x_n)$ and $(q_n)$, both increasing to infinity, such that $q_n \asymp_{M,\eta} \log \log x_n$ and $$V_\eta(x_n;q_n)>Mx_n\log q_n\, .$$
   The same statement holds for $G_\eta$ in place of $V_\eta$.
\end{thm}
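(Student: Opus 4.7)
The strategy is to adapt the resonance argument of Fiorilli and Martin \cite{FiMa} to the smoothed variance $V_\eta$, paying careful attention to the Mellin transform $\M(s) := \int_0^\infty \eta(t) t^{s-1}\, dt$ of the weight. By orthogonality of Dirichlet characters one first rewrites
\begin{equation*}
V_\eta(x;q) = \frac{1}{\phi(q)} \sum_{\substack{\chi \bmod q \\ \chi \neq \chi_0}} \bigl|\psi_\eta(x;\chi)\bigr|^2, \qquad \psi_\eta(x;\chi) := \sum_{n \geq 1} \Lambda(n)\chi(n)\eta(n/x),
\end{equation*}
and then invokes the classical explicit formula to obtain
\begin{equation*}
\psi_\eta(x;\chi) = -\sum_{\rho_\chi} \M(\rho_\chi)\, x^{\rho_\chi} + O_\eta(1),
\end{equation*}
the inner sum ranging over the non-trivial zeros of $L(s,\chi)$. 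The hypotheses \eqref{support} ensure that $\M$ is holomorphic in a strip containing the critical line, with rapid decay on vertical lines; moreover $\M(1/2) > 0$ since $\eta \geq 0$ is nontrivial, so $\M(1/2+i\gamma)$ is bounded below by a positive constant $c_\eta$ on any fixed neighborhood of $\gamma = 0$. This is what guarantees that the Mellin dampening does not annihilate the low-lying zeros, which drive the resonance.

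Following \cite{FiMa}, I would take $q_n$ roughly equal to a primorial $\prod_{p \leq y_n} p$, so that the number of primitive characters modulo $q_n$, and hence the pool of low-lying $L$-function zeros, grows with $n$. A quantitative version of Kronecker's simultaneous Diophantine approximation theorem then produces arbitrarily large $x_n$ at which the phases $x_n^{i\gamma} = e^{i\gamma \log x_n}$ are within $o(1)$ of a common unit-modulus value, for every low-lying zero $\gamma$ attached to some $\chi \bmod q_n$. Under this alignment, the inner sum in the explicit formula is, up to a lower-order error, a sum of positive real quantities $\M(1/2 + i\gamma)$, giving $|\psi_\eta(x_n;\chi)|^2 \gg c_\eta^2\, x_n\, N_\chi^\ast$ where $N_\chi^\ast$ is a suitable count of aligned zeros of $L(\cdot,\chi)$. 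Averaging over $\chi$ and combining with classical zero-counting estimates for Dirichlet $L$-functions yields $V_\eta(x_n;q_n) > M x_n \log q_n$ for any prescribed $M > 0$ and $n$ large.

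The range condition $q_n \asymp_{M,\eta} \log\log x_n$ follows from a bookkeeping of the Kronecker step: aligning $N$ frequencies simultaneously requires $\log x_n$ of size at most exponential in $N$, and the minimal amount of resonance producing an $M$-amplification requires $N$ of size $O_{M,\eta}(q_n)$. This gives $\log\log x_n \asymp_{M,\eta} q_n$, as desired. The statement for $G_\eta$ in place of $V_\eta$ follows from the one for $V_\eta$ because $G_\eta - V_\eta$ comes from the proper prime-power terms, which contribute at most $O_\eta(x^{1/2})$ by the decay hypothesis in \eqref{support} and are hence negligible at our scale.

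The main obstacle will be controlling the high zeros ($|\gamma|$ large), which are not addressed by the Kronecker alignment and could in principle destructively interfere with the resonated low zeros. The rapid decay of $\M(1/2 + i\gamma)$ on vertical lines, together with a second-moment truncation argument, should ensure that this unresonated tail is dominated by the aligned bulk; this quantitative trade-off between the Mellin dampening and the zero density is the genuinely new ingredient compared to \cite{FiMa}, where the absence of smoothing makes the high-zero contribution behave differently.
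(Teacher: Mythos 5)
The broad strokes of your plan (Parseval decomposition, explicit formula, phase synchronization via Diophantine approximation, and passing from $V_\eta$ to $G_\eta$ via prime-power terms) match the paper's strategy, but two essential ingredients are missing or wrong, and either one would sink the argument as written.

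\textbf{First}, you implicitly work as if all low-lying zeros sit on the critical line. The paper cannot and does not assume GRH; instead it introduces the weaker hypothesis $GRH_\eta$ (that $\M$ vanishes at all zeros with $\Re(\rho)>\tfrac12$) and \emph{splits into two cases}. When $GRH_\eta$ fails, it uses a Kaczorowski--Pintz oscillation theorem to get an even larger lower bound for $\psi_\eta$, and when $GRH_\eta$ holds, it still has to deal with possible zeros with $\Re(\rho)<\tfrac12$: Lemma~\ref{Riemann-Hyp} shows that under $GRH_\eta$ there are only finitely many such zeros in any bounded region (because they pair via the functional equation with zeros of $\M$), and a zero-density estimate then bounds their total contribution. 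Your proposal addresses neither the failure of $GRH_\eta$ nor the zeros to the left of the critical line, so the resonance step is not justified.

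\textbf{Second}, your claim that after alignment ``the inner sum in the explicit formula is, up to a lower-order error, a sum of positive real quantities $\M(1/2+i\gamma)$'' is false: $\M(1/2+i\gamma)$ is a genuinely complex number for $\gamma\neq 0$, and aligning the phases $x^{i\gamma}$ to a common value does nothing to align the arguments of $\M(1/2+i\gamma)$, so the resonated sum could still suffer massive cancellation. Consequently, your proposed route --- a pointwise lower bound on $\M$ near $\gamma=0$ plus a count of aligned zeros --- does not produce a lower bound on $\big|\sum_\rho \M(\rho_\chi)\big|$. The paper circumvents this entirely: by Weil's explicit formula (Proposition~\ref{Weil}) one has $\sum_{\rho_\chi}\M(\rho_\chi)=\eta(1)\log q_\chi+O_\eta(1)$, a real, positive, and large quantity; the Diophantine step then only has to ensure that the phase-twisted sum $\sum\M(\rho_\chi)e^{i\gamma t}$ is \emph{close} to this untwisted sum (via $|1-e^{i\gamma t}|$ small), which is a much weaker requirement. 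Without Weil's formula or an equivalent lower bound on $\sum_\rho\M(\rho_\chi)$, your approach stalls.

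Two lesser remarks: the paper does not use primorial moduli; it restricts to a subset $\mathcal{F}_q\subset\mathcal{X}_q$ of density $\geq\tfrac12$ consisting of characters of near-maximal conductor (Lemma~\ref{subset}, lifted from Fiorilli--Martin), which makes the argument work for a positive proportion of all $q$ and is needed anyway for the averaged Theorem~\ref{moyenne}. And your treatment of high zeros via ``second-moment truncation'' is handled in the paper simply by the decay $\M(\rho)\ll|\rho|^{-2}$ (Lemma~\ref{Maj}) together with the Riemann--von Mangoldt formula, giving a tail $\ll\log(qT)/T$; no second-moment argument is required.
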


We next turn our attention to the $q$-average $\frac{1}{Q}\sum_{q\leq Q} V_\eta(x;q)$, which closely aligns with the weighted average considered in \cite{Density}. In \textit{loc.cit.} the authors proved, under $GRH$, that the average they consider is $\asymp x\log Q$ in the range $x^{\frac{1}{2}+\varepsilon}\leq Q\leq x$. A natural question is whether we have in general \begin{equation}\label{Avg ?}\frac{1}{Q}\sum_{q\leq Q} V_\eta(x;q)\ll_\eta x\log Q \, , \end{equation}
when $Q\leq x$ and $Q\to +\infty$ ? We note that since we are averaging $V_\eta$, this is weaker than \eqref{HConjAn}.
Fiorilli and Martin \cite{FiMa}*{Theorem 1.5} proved that the $q$-average for $V_\Lambda$ and $G$ can be as large as $x(\log \log \log x)^2$ in the range $Q\leq \sqrt{\frac{\log \log x}{\log \log \log x}} $. However we will see (in Corollary~\ref{bestbounds}) that if $GRH$ holds, then for all $q\leq x$ we have $V_\eta(x;q)\ll x (\log q)^2$. Thus for all $Q\leq x$ we have $$\frac{1}{Q}\sum_{q\leq Q} V_\eta(x;q) \ll x(\log Q)^2\,  .$$
Therefore, the $q$-average of $V_\eta$ cannot be as large as the $q$-average of $V_\Lambda$. 
Nevertheless, our second main result states that~\eqref{Avg ?} does not hold in the range $Q\asymp \sqrt{\log \log x}$. 
\begin{thm}\label{Moyenne_simplifié}
Let $\eta \in \mathcal{S}$. For all $M>0$ there exist sequences $(x_n)$ and $(Q_n)$, both increasing to infinity, such that $Q_n \asymp_{M,\eta} \sqrt{ \log \log x_n}$ and $$\frac{1}{Q_n}\sum_{Q_n<q\leq 2Q_n}V_\eta(x_n,q)>Mx_n\log Q_n\, .$$
The same statement holds for $G_\eta$ in place of $V_\eta$.
\end{thm}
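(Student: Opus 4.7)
My plan is to adapt the Fiorilli--Martin strategy \cite{FiMa} to the smoothed variance $V_\eta$ with the additional $q$-average. The first step is the standard decomposition: by orthogonality of characters,
\[
V_\eta(x;q) = \frac{1}{\phi(q)}\sum_{\substack{\chi\bmod q\\\chi\neq\chi_0}}\Bigl|\sum_n\Lambda(n)\chi(n)\eta(n/x)\Bigr|^2,
\]
and the explicit formula applied to each primitive character $\chi^*$ (of conductor $q^*$) inducing a non-principal $\chi$ gives
\[
\sum_n\Lambda(n)\chi^*(n)\eta(n/x) = -\sum_\rho\widetilde\eta(\rho)\,x^\rho + O_\eta(1),
\]
where $\widetilde\eta(s) = \int_0^\infty\eta(u)u^{s-1}\,du$ is the Mellin transform and the sum runs over non-trivial zeros of $L(s,\chi^*)$. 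Discarding the negligible difference between $\chi$ and its primitive $\chi^*$, and using positivity to retain only primitive characters of conductor $q^*\in(Q,2Q]$ (so that each such $\chi^*$ contributes to the unique modulus $q=q^*$) yields the lower bound
\[
\frac{1}{Q}\sum_{Q<q\leq 2Q}V_\eta(x;q) \;\gtrsim\; \frac{x}{Q}\sum_{Q<q^*\leq 2Q}\frac{1}{\phi(q^*)}\sums_{\chi^*\bmod q^*}\bigl|S_{\chi^*}(\log x)\bigr|^2,
\]
where, under GRH, $S_{\chi^*}(t) := \sum_{\gamma_{\chi^*}}\widetilde\eta(\tfrac12+i\gamma_{\chi^*})e^{i\gamma_{\chi^*}t}$. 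The case where GRH fails for some $L(s,\chi^*)$ in the range is handled by a separate, easier argument: a single zero with real part exceeding $\tfrac12$ contributes more than the required lower bound on its own.

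The rapid decay of $\widetilde\eta$ on vertical strips effectively truncates $S_{\chi^*}$ to zeros of height $|\gamma_{\chi^*}|\leq T$ for some $T=T(\eta)$; there are $\asymp\log Q$ such low-lying zeros per character and $\asymp Q^2$ primitive characters in the conductor range. The typical size of $|S_{\chi^*}(t)|^2$ as $t$ varies is $\asymp\log Q$ (the diagonal contribution), producing an average of order $x\log Q$, which only matches the expected upper bound. The heart of the argument is a resonance construction: I aim to find arbitrarily large $t=\log x$ such that, for a positive proportion of characters $\chi^*$, the phases $\gamma_{\chi^*}t$ simultaneously approximate $-\arg\widetilde\eta(\tfrac12+i\gamma_{\chi^*})$ for all low-lying zeros, thereby boosting $|S_{\chi^*}(t)|^2$ from $\log Q$ to $\asymp(\log Q)^2$. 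A resonating fraction of size $\gg 1/\log Q$ already makes the $q$-average $\gg x(\log Q)^2$, which exceeds $Mx\log Q$ once $Q\geq Q_0(M,\eta)$.

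The main obstacle is the Diophantine step, since linear independence of the ordinates of zeros across different $L$-functions is not available unconditionally. Following \cite{FiMa}, this is circumvented by a high-moment computation: one integrates the squared resonant expression against a smooth weight in $t$, expands the resulting $2k$-fold sum over character--zero tuples, and extracts a positive contribution from the ``balanced'' (near-diagonal) tuples. Markov's inequality then yields the desired $x$ unconditionally. Tracking the exponential dependence of the length of the $t$-interval on the number of character--zero pairs that must be brought into resonance (of order $MQ^2$) reproduces precisely the relation $\log\log x\asymp Q^2$, i.e. $Q_n\asymp\sqrt{\log\log x_n}$ with implicit constants depending on $M$ and $\eta$, as stated. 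The transfer of the argument from $V_\eta$ to $G_\eta$ is routine and parallels the treatment in \cite{FiMa}.
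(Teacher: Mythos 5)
Your plan captures the overall architecture (Parseval, explicit formula, truncation to low-lying zeros via Mellin decay, a case split on whether a Riemann-type hypothesis holds, synchronization of phases), but the central technical step is misdescribed in a way that would leave a real gap. You flag "linear independence of the ordinates of zeros across different $L$-functions is not available unconditionally" as the obstacle and propose a high $2k$-moment expansion plus Markov's inequality to circumvent it. That is not what \cite{FiMa} does, and it is not needed: the paper's synchronization is a direct pigeonhole argument. Lemma~\ref{Dio} (which quotes \cite{FiMa}*{Lemma 3.7}) says that for any finite set $\Lambda$ of real numbers with $|\Lambda|=k$ and any integer $M\geq 2$, for $N\geq M^{3k}$ there exists $n\in(N^{1/3},N]$ with $\|n\lambda\|\leq 1/M$ for all $\lambda\in\Lambda$ --- no linear independence whatsoever is required. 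Applied with $\Lambda$ the set of ordinates $\mathfrak{I}(\rho_\chi)/(2\pi)$ with $|\mathfrak{I}(\rho_\chi)|\leq D_\eta$ over all $\chi\in\mathcal{G}_q$ and $q\in\mathcal{D}_Q$, this produces the shift $t_Q=n$ that makes $\sum_q\sum_\chi\psi_\eta(e^{t_Q},\chi)$ large, and the size $|\Lambda|\asymp\sum_q\Phi_q\log Q\asymp Q^2$ (together with the parameter $f$ in Proposition~\ref{GRH-True}) forces $\log\log x_Q\asymp_{M,\eta}Q^2$. Your moment proposal, as sketched, extracts only diagonal terms, which give the typical size $\asymp\log Q$ of $|\psi_\eta|^2/x$, not the boosted size $\asymp(\log Q)^2$; it is not clear it produces the extreme-value configuration the theorem needs, and in any case the much simpler pigeonhole already does the job.

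A second issue is the dichotomy: the paper does not split on GRH but on $GRH_\eta$ (Hypothesis~\ref{Hyp GRH}), which permits off-line zeros $\rho_\chi$ with $\mathfrak{R}(\rho_\chi)>1/2$ as long as $\M(\rho_\chi)=0$, in which case the term $\M(\rho_\chi)x^{\rho_\chi}$ contributes nothing to the explicit formula. So "a single zero with real part exceeding $1/2$ contributes more than the required lower bound on its own" is false both because $\M$ can vanish there and because the contribution of a single zero can be cancelled by the others at most $x$ --- the paper (Proposition~\ref{GRH-False}) genuinely needs the Kaczorowski--Pintz $\Omega$-theorem and produces $x_q$ only along multiples of a fixed $q_0$, which then must be located inside each dyadic block $(Q,2Q]$. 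Under $GRH_\eta$ one must also control zeros with $\mathfrak{R}(\rho_\chi)<\tfrac12$ (not ruled out by $GRH_\eta$), which the paper does via Lemma~\ref{Riemann-Hyp} and a zero-density estimate; these points are absent from your sketch. Finally, minor: the paper keeps characters of near-maximal conductor ($\log q_\chi>\log q-(\log\log q)^2$, Lemma~\ref{subset}), not only primitive characters of conductor in $(Q,2Q]$, and uses Cauchy--Schwarz to pass from a large $\bigl|\sum_q\sum_\chi\psi_\eta\bigr|$ to a large $\sum_q\sum_\chi|\psi_\eta|^2$, rather than arranging each $|\psi_\eta|$ to be individually large.
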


Theorems~\ref{Hooley-False} and \ref{Moyenne_simplifié} will, in fact, be consequences of more precise but technical results that describe the growth behavior of $V_\eta$ and $G_\eta$ in terms of the relative sizes of $q$ and $x$. 
The following result generalizes Theorem~\ref{Hooley-False}.
\begin{thm}\label{princ} Let $\eta \in \mathcal{S}$.
    Let $h:\R_{>0}\to\R_{>0}$ be an increasing function, with $\underset{x\to +\infty}{\lim} h(x)=+\infty$, satisfying for all $t\in \R$ :
    \begin{equation}\label{property}
        h\left(e^{t^A}\right)\ll_{A}h(e^t)
    \end{equation}
    \begin{enumerate}
        \item If for all $x\geq e^3$ we have $$\frac{\log \log x}{\log \log \log x}\leq h(x)\leq \log \log x\, ,$$then, for a positive proportion of moduli $q$ there exists $x_q$ with $h(x_q)\asymp_\eta q$ such that $$V_\eta(x_q;q)\gg_{\eta}  \frac{\log \log x_q}{ q}x_q \log q\,.$$
        \item If for all $x\geq e^3$ we have $$h(x)\leq \frac{\log \log x}{\log \log \log x}\, , $$then, for a positive proportion of moduli $q$ there exists $x_q$ with $h(x_q)\asymp_\eta q$ such that $$V_\eta(x_q;q)\gg_{\eta} x_q (\log q)^2\,.$$
    \end{enumerate}
    The same statements hold for $G_\eta(x_q; q)$ in place of $V_\eta(x_q;q)$.
\end{thm}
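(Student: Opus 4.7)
My plan is to follow the structural blueprint of Fiorilli and Martin \cite{FiMa}, adapted to the smoothly weighted setting. The first step is standard: orthogonality of Dirichlet characters gives
\[
V_\eta(x;q) = \frac{1}{\phi(q)} \sum_{\chi \neq \chi_0} \left| \psi_\eta(x,\chi) \right|^2, \qquad \psi_\eta(x,\chi) := \sum_{n\geq 1} \Lambda(n)\chi(n)\eta(n/x),
\]
while Mellin inversion followed by a contour shift past the non-trivial zeros of $L(s,\chi)$ produces an explicit formula of the shape
\[
\psi_\eta(x,\chi) = -\sum_{\rho_\chi} x^{\rho_\chi}\, \widetilde{\eta}(\rho_\chi) + O(1),
\]
where $\widetilde{\eta}$ is the Mellin transform of $\eta$. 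The decay hypotheses \eqref{support} on $\eta$ give super-polynomial decay of $\widetilde{\eta}(\sigma+i\tau)$ in $|\tau|$; this is the dampening that distinguishes $V_\eta$ from $V_\Lambda$ and effectively truncates the zero-sum to low-lying zeros with $|\gamma_\chi| \ll 1$.

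The main analytic step is a resonance / constructive-interference argument. For each primitive character $\chi$ mod $q$, let $\gamma_\chi$ denote the ordinate of a non-trivial zero of $L(s,\chi)$ of smallest absolute value. I would seek $x_q = e^{t_q}$, with $t_q$ inside the window forced by $h(x_q) \asymp_\eta q$, satisfying a simultaneous Diophantine-approximation condition of the form $\gamma_\chi t_q \approx 0 \pmod{2\pi}$ for as many characters $\chi$ as the width of that window permits. For such a $t_q$ the leading zeros contribute coherently, and $V_\eta(x_q;q)$ is bounded below by $x_q$ times the number of resonating characters. The dichotomy at $h(x) = \log\log x/\log\log\log x$ then reflects the size of the Diophantine budget: in regime (2) the window is wide enough to accommodate a full $(\log q)^2$ worth of constructive phases, while in regime (1) only a fraction $\log\log x_q / q$ of that optimal count can be retained. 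The passage from $V_\eta$ to $G_\eta$ is routine since, under the weight $\eta$, prime powers contribute negligibly.

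The main obstacle will be carrying out the simultaneous Diophantine approximation uniformly in $q$ and showing that the structural condition on $q$ --- namely, that many $\chi \bmod q$ have an exceptionally low first-zero ordinate --- holds for a positive proportion of moduli. This requires Montgomery-type control on the vertical distribution of low zeros of $L(s,\chi)$ averaged over $\chi \bmod q$, coupled with a pigeonhole on boxes in $\mathbb{T}^N$ applied to the vector $(\gamma_\chi)_{\chi \bmod q}$. The regularity hypothesis \eqref{property} enters precisely to ensure that the target window for $t_q$ is realisable on the relevant range of $x_q$, so that the condition $h(x_q) \asymp_\eta q$ can actually be solved. One final subtlety is the unconditional treatment of potential Siegel or exceptional zeros: they do not break the construction --- indeed they would only amplify $|x_q^{\rho_\chi}|$ --- but they must be separated from the main argument to avoid spurious constants in the final lower bound.
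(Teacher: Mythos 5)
Your proposal correctly identifies the Parseval decomposition, the smoothed explicit formula with its truncation to low-lying zeros, the Diophantine synchronization, and the reduction from $G_\eta$ to $V_\eta$ via prime-power sparseness. These are indeed the scaffolding the paper uses. But the central analytic ingredient is missing, and its absence breaks the lower bound quantitatively.

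You set up the resonance around the single zero of smallest ordinate for each character, so that each resonating $\chi$ contributes $\sim x^{1/2}$ to $\psi_\eta(x,\chi)$. Feeding this into Cauchy--Schwarz over $N$ resonating characters gives $V_\eta(x;q)\gg Nx/\phi(q)$, which is at most $x$ even when $N\asymp\phi(q)$; you cannot reach $x(\log q)^2$ this way no matter how many characters resonate. The factor $(\log q)^2$ must come from each character individually contributing $\gg x^{1/2}\log q$, and the source of that extra $\log q$ in the paper is Weil's explicit formula (Proposition~\ref{Weil}), which shows
\[
\sum_{\rho_\chi}\mathcal{M}\eta(\rho_\chi)=\eta(1)\log q_\chi+O_\eta(1).
\]
After synchronizing \emph{all} the $\asymp\log q$ zeros with $|\mathfrak{I}(\rho_\chi)|\le D_\eta$ (not just the lowest one), this identity forces the truncated zero-sum to have modulus $\gg\log q_\chi$, hence $|\psi_\eta(x,\chi)|\gg x^{1/2}\log q$ for the synchronized characters. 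The Diophantine budget you describe then governs how many characters can be synchronized, which is what produces the dichotomy between cases (1) and (2); but without the Weil-formula step the per-character gain is lost and the whole argument falls short by a factor $(\log q)^2$ in case (2) and by $\tfrac{\log\log x_q}{q}\log q$ in case (1).

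Two further points. First, your treatment of the ``non-GRH'' situation is hand-waved: you cannot simply assert that off-line zeros only amplify, because you do not control their phases inside a resonance argument, and you cannot add them to the Diophantine system without knowing their ordinates exist and are bounded. The paper circumvents this with a genuine dichotomy: when $GRH_\eta$ fails it invokes a Kaczorowski--Pintz/Landau oscillation theorem (Proposition~\ref{GRH-False}), and when $GRH_\eta$ holds it additionally needs Lemma~\ref{Riemann-Hyp}, which combines $GRH_\eta$ with Montgomery's zero-density estimate to show that zeros with $\mathfrak{R}(\rho)<1/2$ in the truncation window contribute $\ll q^\alpha$ for some $\alpha<1$, hence are negligible. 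This second issue is invisible in your account but essential: under $GRH_\eta$ there may still be zeros off the $\tfrac12$-line to the left, and without bounding their number the Weil-formula argument does not localize the mass on the critical line. Finally, the role of \eqref{property} is not to make the window realisable per se but to convert a two-sided bound $\log\log x_q\asymp\log\log h^{-1}(q)$ (which the Diophantine lemma naturally produces) into the stated $h(x_q)\asymp_\eta q$; worth being precise since this is what ties the construction to the range $q\asymp h(x)$.
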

We observe that case (1) of Theorem \ref{princ} is similar to case (1) of \cite{FiMa}*{Theorem 1.3}. However, concerning case (2) Fiorilli and Martin proved that $V_\Lambda(x_q;q) \gg x_q (\log q+\log \log \log x_q)^2$. This discrepancy is expected since our weighted variance satisfies the $GRH$ bound $$V_\eta(x;q)\ll_\eta x (\log q)^2\, ,$$
which is, thus, optimal by case (2).

Similarly for the $q$-average $\frac{1}{Q}\sum_{q\leq Q} V_\eta(x;q)$ one might ask whether the estimate  $$\frac{1}{Q}\sum_{q\leq Q} V_\eta(x;q) \ll x(\log Q)^2\,  ,$$ is best possible. The following statement, which generalizes Theorem~\ref{Moyenne_simplifié}, provides an affirmative answer to this question. We note that this Theorem is similar to \cite{FiMa}*{Theorem 1.5}, but it offers a wider range for $V_\eta$ and $G_\eta$ compared to the range given in \cite{FiMa} for the classical variances $V_\Lambda$ and $G$.

    \begin{thm}\label{moyenne} Let $\eta \in \mathcal{S}$.
    Let $h:\R_{>0}\to\R_{>0}$ be as in Theorem~\ref{princ} and assume moreover that there exists $A_0>0$ such that for all $t\in \R$
    \begin{equation}\label{minoration_h}
        h\left(e^{t^{A_0}}\right)\geq 2h(e^t)\, .
    \end{equation}
    \begin{enumerate}
        \item If for all $x\geq e^3$ we have $$\sqrt{\frac{\log \log x}{\log \log \log x}}\leq h(x) \leq \sqrt{ \log \log x}\, ,$$then, for all $Q\geq 3$ there exists $x_Q$ with $h(x_Q)\asymp_\eta Q$ such that $$\frac{1}{Q}\sum_{Q<q\leq 2Q} V_\eta(x_Q;q)\gg_{\eta} \frac{\log \log x_Q}{Q^2} x_Q \log Q \,.$$
        \item If for all $x\geq e^3$ we have $$h(x)\leq \sqrt{\frac{\log \log x}{\log \log \log x}}\, , $$then, for all $Q\geq 3$ there exists $x_Q$ with $h(x_Q)\asymp_\eta Q$ such that $$\frac{1}{Q}\sum_{Q<q\leq 2Q} V_\eta(x_Q;q)\gg_{\eta} x_Q (\log Q)^2\,.$$
    \end{enumerate}
    The same statements hold for $G_\eta(x_q; q)$ in place of $V_\eta(x_q;q)$.
\end{thm}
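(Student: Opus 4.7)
The plan is to adapt the zero-sum machinery used in Theorem~\ref{princ} to the $q$-averaged setting, in parallel with the passage from Theorem~1.3 to Theorem~1.5 in \cite{FiMa}. The starting point is the explicit formula expression of $V_\eta(x;q)$: after applying Plancherel/character orthogonality one obtains, schematically,
\[
V_\eta(x;q) = \frac{x}{\phi(q)}\sum_{\chi\neq\chi_0\,(q)}\bigg|\sum_\rho \mathcal{M}\eta(\rho)\, x^{\rho - 1/2}\bigg|^2 + (\text{error}),
\]
where $\rho$ runs over nontrivial zeros of $L(s,\chi)$, with an analogous identity for $G_\eta$. Because $\eta\in\mathcal{S}$ is twice continuously differentiable with the decay imposed by \eqref{support}, the Mellin transform $\mathcal{M}\eta$ decays rapidly on vertical lines, so the inner zero sum is effectively supported on low-lying zeros.

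Summing this identity over $Q<q\leq 2Q$ brings in $\asymp Q^2$ primitive characters. The point $x_Q$ is chosen so that $h(x_Q)\asymp Q$; the monotonicity of $h$ combined with the growth hypothesis \eqref{minoration_h} guarantees that such an $x_Q$ exists and that $\log\log x_Q$ has the expected size relative to $Q$ (the inequality $h(e^{t^{A_0}})\geq 2h(e^t)$ is the quantitative substitute for invertibility of $h$ and supplies Diophantine flexibility in the choice of $x_Q$). For the lower bound one restricts the inner double sum to characters whose $L$-function has a particularly useful zero configuration, so that a positive proportion of the $\asymp Q^2$ characters contribute a zero sum of the expected size. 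In case~(1), the resulting average has order $\frac{\log\log x_Q}{Q^2}x_Q\log Q$ once the factors $\phi(q)^{-1}\asymp Q^{-1}$ and the outer $Q^{-1}$ are accounted for; in case~(2), where $Q$ is smaller and the zero sum can be pushed up to its GRH-allowed ceiling, the bound becomes $x_Q(\log Q)^2$, matching Corollary~\ref{bestbounds}.

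The hardest step is producing the lower bound on the average of the character/zero sums. The plan is to follow the second-moment/pigeonhole resonance argument used by Fiorilli and Martin: one selects $x_Q$ (within the flexibility granted by \eqref{minoration_h}) so that the phases $x_Q^{i\gamma}$ for low-lying ordinates $\gamma$ of zeros of $L(s,\chi)$ align constructively for a positive proportion of characters $\chi$ modulo $q\in(Q,2Q]$. Off-diagonal contributions, arising from pairs of distinct zeros or pairs of distinct characters, are bounded via large-sieve or zero-density inputs of the type already used in the proof of Theorem~\ref{princ}, while the error term in the explicit formula is dealt with exactly as in the unaveraged case. The smooth weight $\eta$ makes these auxiliary bounds strictly easier than in the classical unweighted setting of \cite{FiMa}, which is what extends the admissible range of $h$ in case~(1) from $\sqrt{\log\log x/\log\log\log x}$ up to $\sqrt{\log\log x}$; the adaptation to $G_\eta$ is parallel, with the Mellin transform $\mathcal{M}\eta$ replaced by the corresponding transform appearing in the explicit formula for $G_\eta$.
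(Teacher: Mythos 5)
Your high-level plan — Parseval plus the explicit formula, phase alignment of the $x_Q^{i\gamma}$ via a pigeonhole/Diophantine argument, then a second-moment lower bound over the dyadic block $(Q,2Q]$ — is indeed the backbone of the paper's proof (through Lemma~\ref{MainLemma} and Proposition~\ref{GRH-True}). But a few of your key technical claims do not match what actually has to happen, and at least one omission is a genuine gap.

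First, you never address what happens if $GRH_\eta$ fails. The whole resonance machinery (Lemma~\ref{Riemann-Hyp}, the restriction of the explicit-formula sum to low-lying zeros with real part $\leq\frac{1}{2}$, the GRH bound $\psi_\eta(x,\chi)\ll x^{1/2}\log q$) is only available under $GRH_\eta$. The paper's proof of Theorem~\ref{moyenne} begins with an explicit case split: when $GRH_\eta$ fails, it invokes Proposition~\ref{GRH-False} to find a single modulus $q_Q\in(Q,2Q]$ (a multiple of some $q_0$) with $V_\eta(x_Q;q_Q)\geq x_Q^{1+\delta}$, and then drops all other terms in the average by positivity. Without this branch, your argument has no footing in the non-$GRH_\eta$ case.

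Second, you misattribute the role of \eqref{minoration_h}. It does not supply ``Diophantine flexibility in the choice of $x_Q$'' — that flexibility comes entirely from Lemma~\ref{Dio}. What \eqref{minoration_h} actually does is guarantee $\log\log h^{-1}(2Q)\leq A_0\log\log h^{-1}(Q)$, hence $\log\log h^{-1}(q)\asymp\log\log h^{-1}(Q)$ uniformly for $q\in(Q,2Q]$. This is exactly what makes $g(q)\asymp g(Q)$ in the applications of Proposition~\ref{GRH-True}, a hypothesis without which the proposition is unusable.

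Third, your description of the lower-bound mechanism invokes ``off-diagonal contributions from pairs of distinct zeros or pairs of distinct characters'' and ``large-sieve'' bounds. That is a different (Soundararajan-style second-moment resonance) scheme from what the paper actually runs. Here the Diophantine lemma makes all the selected phases simultaneously point the same way, so one gets a clean \emph{first-moment} lower bound $\bigl|\sum_q\sum_{\chi\in\mathcal{G}_q}\psi_\eta(x_Q,\chi)\bigr|\gg\bigl(\sum_q\Phi_q\bigr)\sqrt{x_Q}\log Q$ (anchored on Weil's explicit formula $\sum_{\rho_\chi}\M(\rho_\chi)=\eta(1)\log q_\chi+O_\eta(1)$), which is then promoted to a second-moment bound by one Cauchy--Schwarz. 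There are no off-diagonal cross terms to estimate; the only error terms are (a) high-ordinate zeros (Riemann--von Mangoldt), (b) exceptional zeros inside the rectangle $\Gamma$ (zero-density, via Lemma~\ref{Riemann-Hyp}), and (c) the explicit-formula remainder $O_\eta(\log q)$. Likewise, the characters are not selected by any ``useful zero configuration''; they are selected by conductor size ($\log q_\chi>\log Q-(\log\log Q)^2$, via Lemma~\ref{subset}) and then further thinned so that $|\mathcal{G}_q|\asymp\frac{g(q)\phi(q)}{\log q}$, which is what actually produces the dichotomy between cases (1) and (2).
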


Let us briefly outline the structure of this paper. Section~\ref{preliminary} focuses on establishing preliminary results and introducing essential tools. In this section we will primarily study, for all $x>0$ and $\chi \in \mathcal{X}_q$, the set of Dirichlet characters modulo $q$, weighted versions of the classical prime counting functions $\psi(x;\chi)$ and $\theta(x,\chi)$ associated with $\chi$, namely: \begin{align}\label{psi}
    \psi_\eta(x,\chi)&:=\sum_{n\geq1}\Lambda(n)\chi(n)\eta\left(\frac{n}{x}\right),\\
    \theta_\eta(x,\chi)&:=\sum_{\substack{p\geq1}}\chi(p) \eta\left( \frac{p}{x}\right) \log p \, .
\end{align} 
Fiorilli and Martin \cite{FiMa}*{Theorem 1.2} proved that for any non-principal character $\chi$ modulo $q$, one has $$\frak{R}(\psi(x,\chi))=\Omega_{-}\left(x^{\frac{1}{2}}\log \log \log x \right)\, .$$ This type of estimates cannot be extended to the smooth case. Indeed, under $GRH$, we will see (Corollary~\ref{bestbounds}) that $\psi_\eta(x,\chi)\ll_{\eta,q} x^{\frac{1}{2}}$. We will relate $V_\eta$ to $\psi_\eta$ and $G_\eta$ to $\theta_\eta$ via the Parseval identities~\eqref{Parseval}, which explains why $V_\eta$ and $G_\eta$ cannot attain values as large as those of $V_\Lambda$ and $G$. Section~\ref{preliminary} also presents Hypothesis $GRH_\eta$ which is a weaker variant of the Generalized Riemann Hypothesis $(GRH)$. We then proceed to differentiate between two cases:
\begin{enumerate}
    \item The case where \(GRH_\eta\) is not satisfied is addressed in Section~\ref{NoGRH}. We use a theorem by Kaczorowski and Pintz~\cite{KP}, an improved version of Landau's Theorem, to prove that $\psi_\eta(x,\chi)$ can be as large as $x^{\theta-\varepsilon}$ for certain $\theta \in (\tfrac{1}{2},1)$ and a sufficiently small $\varepsilon>0$. Following the approach in~\cite{FiMa}, we deduce Proposition~\ref{GRH-False}.
    \item The case where \(GRH_\eta\) holds is examined in Section~\ref{GRH}. Our approach primarily employs the explicit formula~\eqref{expl}, where, under $GRH_\eta$, the sums are reduced to summing only over zeros $\rho$ such that $\frak{R}(\rho) \leq \frac{1}{2}$. One of the main difficulties is that zeros $\rho$ with $\frak{R}(\rho)<\frac{1}{2}$ may give a large contribution in general. To overcome this problem, we will prove (Lemma~\ref{Riemann-Hyp}) that $GRH_\eta$ implies that inside bounded sets there are only finitely many zeros of all Dirichlet $L$-functions that contradict the Generalized Riemann Hypothesis and we will use \cite{Topics}*{Theorem 12.1} to prove that their contribution cannot be large. This will enable us to prove that the main contribution is coming from zeros on the critical line. We then use Diophantine approximation to synchronize the summands, culminating with Proposition~\ref{GRH-True}.
\end{enumerate}

Finally in Section~\ref{proofs}, we combine Propositions~\ref{GRH-False} and~\ref{GRH-True} to deduce our main theorems.

\section{preliminary results}\label{preliminary}

For $\eta \in \mathcal{S}$, we define its Mellin transform at $s\in \C$ such that $\frak{R}(s)>-1$ by
\begin{equation}\label{DefMellin}
    \left \{ \M \right\}(s)=\mathcal{M}\eta(s):=\int_0^{+\infty}\eta(t)t^{s-1}dt\, .
\end{equation} This is a holomorphic function on $\{\frak{R}(s)>-1\}$ (by \eqref{support}) which has the following properties: 

\begin{lem}\label{Maj} Let $\eta \in \mathcal{S}$.
    \begin{enumerate}
        \item For all $s\in \C\setminus \{0\}$ with $\frak{R}(s)>-1$ we have: $$\left|\M(s)\right|\leq \frac{1}{|s(s+1)|}\int_0^{+\infty} |\eta''(t)|t^{\mathfrak{R}(s)+1}dt$$
        \item For all $s\in \C \setminus \{0\}$ such that $-\frac{1}{2}\leq \mathfrak{R}(s)\leq 2$ we have $$\left|\M(s)\right|\leq \frac{B_{\eta}}{|s|^2}\, ,$$
        where
        \begin{equation}\label{B_eta}
            B_\eta:=\underset{-\frac{1}{2}\leq \sigma \leq 2}{\max}\int_0^{+\infty}|\eta''(t)|t^{\sigma+1}dt
        \end{equation}
    \end{enumerate}
\end{lem}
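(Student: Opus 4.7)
The plan is to prove (1) by integrating the defining integral twice by parts, and then to deduce (2) as an elementary consequence by comparing $|s(s+1)|$ with $|s|^2$ on the strip.

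\textbf{Part (1).} Starting from $\M(s) = \int_0^{+\infty}\eta(t)t^{s-1}dt$, I would integrate by parts with $u = \eta(t)$ and $dv = t^{s-1}dt$, which produces a boundary term $[\eta(t)t^s/s]_0^{+\infty}$. Both endpoints vanish under the assumptions in~\eqref{support}: at infinity because $\eta(t)\ll_j t^{-j}$ beats any polynomial power of $t$, and at $0$ because $\eta(t)\ll t$ combined with $\Re(s)>-1$ gives $|\eta(t)t^s|\ll t^{\Re(s)+1}\to 0$. This yields $\M(s) = -\frac{1}{s}\int_0^{+\infty}\eta'(t)t^s dt$. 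A second integration by parts with $u = \eta'(t)$ and $dv = t^s dt$ produces the boundary term $[\eta'(t)t^{s+1}/(s(s+1))]_0^{+\infty}$, which again vanishes, using $\eta'(t)\ll 1$ near $0$ together with $\Re(s+1)>0$, and the rapid decay of $\eta'$ at infinity. This gives the identity
$$\M(s) = \frac{1}{s(s+1)}\int_0^{+\infty}\eta''(t)\, t^{s+1}dt,$$
from which part (1) follows by taking absolute values and using $|t^{s+1}| = t^{\Re(s)+1}$.

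\textbf{Part (2).} The key observation is the elementary identity $|s+1|^2 - |s|^2 = 2\Re(s)+1$, which yields $|s+1|\geq |s|$ as soon as $\Re(s)\geq -\tfrac{1}{2}$. Combining this with part (1) gives $|\M(s)|\leq \frac{1}{|s|^2}\int_0^{+\infty}|\eta''(t)|t^{\Re(s)+1}dt$, and the integral is bounded by $B_\eta$ by its very definition. One minor check is that $B_\eta<\infty$: for $-\tfrac{1}{2}\leq \sigma\leq 2$, near $0$ the bound $|\eta''(t)|\ll t^{-1}$ from~\eqref{support} gives $|\eta''(t)|t^{\sigma+1}\ll t^{\sigma}$, integrable since $\sigma>-1$, and at infinity the rapid decay from~\eqref{support} dominates the polynomial factor $t^{\sigma+1}$.

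There is no real obstacle here; the statement is essentially a smoothness/decay packaging lemma of Mellin-transform type. The only care required is in verifying that each boundary term in the two integrations by parts genuinely vanishes under the hypotheses defining $\mathcal{S}$, which is where the combined use of the power-type bound $\eta^{(k)}(t)\ll t^{1-k}$ near $0$ and the rapid decay $\eta^{(k)}(t)\ll_j t^{-j}$ at infinity is essential.
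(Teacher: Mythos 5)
Your proof is correct and follows essentially the same route as the paper: a double integration by parts to reach $\M(s) = \frac{1}{s(s+1)}\int_0^{+\infty}\eta''(t)t^{s+1}\,dt$, followed by the triangle inequality for (1), and the observation $|s+1|\geq|s|$ when $\Re(s)\geq-\tfrac{1}{2}$ for (2). You simply spell out the vanishing of the boundary terms and the finiteness of $B_\eta$ in more detail than the paper, which just asserts them.
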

\begin{proof}
    \begin{enumerate}
        \item By double integration by parts we see that for all $s\in \C\setminus\{0\}$ such that $\frak{R}(s)>-1$ we have $$ \M(s)=\frac{1}{s(s+1)}\int_0^{+\infty}\eta''(t)t^{s+1}dt\, . $$
        Hence, the first inequality is a direct consequence of the triangular inequality.
        \item For $-\frac{1}{2}\leq \mathfrak{R}(s)\leq 2 $ we have $|s|\leq |s+1|$. The second inequality is a consequence of the first one.
    \end{enumerate}
\end{proof}

We apply \cite{Mellin}*{Theorem 2.1.1} to see that:
\begin{cor}\label{InverseMellinFormula} Let $\eta \in \mathcal{S}$.
   For all $x>0$ and for all $a>-1$ we have the inversion formula: $$\eta(x)=\frac{1}{2i\pi}\int_{a-i\infty}^{a+i\infty} \M(s) x^{-s}ds$$
\end{cor}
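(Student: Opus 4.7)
The plan is to apply a standard Mellin inversion theorem such as \cite{Mellin}*{Theorem 2.1.1}, whose hypotheses typically require: (i) that $\eta$ be continuous at the point $x$, and (ii) that the Mellin transform $\M(a+it)$ be absolutely integrable in $t$ along the vertical line $\mathfrak{R}(s)=a$.

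The first condition is immediate since any $\eta \in \mathcal{S}$ is twice continuously differentiable on $\R_{>0}$ by definition. For the second condition, I would invoke Lemma~\ref{Maj}(1): with $s = a+it$ and $a > -1$ fixed,
$$|\M(a+it)| \leq \frac{1}{|(a+it)(a+1+it)|}\int_0^{+\infty} |\eta''(u)|\, u^{a+1}\,du .$$
The integral on the right is finite: near $u=0$, the bound $|\eta''(u)| \ll u^{-1}$ from \eqref{support} gives integrand $\ll u^{a}$, which is integrable since $a>-1$; near $u=+\infty$, the rapid decay $|\eta''(u)| \ll_j u^{-j}$ (for any $j\in\N$) provides integrability once $j$ is taken large enough. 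The remaining prefactor $1/|(a+it)(a+1+it)|$ is $O(1/(1+t^2))$, so $\M(a+it)$ is absolutely integrable in $t$ on all of $\R$, and moreover the integrand $\M(s) x^{-s}$ decays like $1/(1+t^2)$ on the contour.

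With both conditions verified, the inversion formula follows directly from \cite{Mellin}*{Theorem 2.1.1}. There is no genuine obstacle in this argument: the only care required is in confirming integrability of $|\eta''(u)|\, u^{a+1}$ near $u=0$ for $a$ just above $-1$, which is handled automatically by the pointwise bounds built into the definition of the class $\mathcal{S}$. (As a sanity check, one notes that for two different choices $a_1,a_2>-1$ the contour can be shifted between the lines $\mathfrak{R}(s)=a_1$ and $\mathfrak{R}(s)=a_2$ by Cauchy's theorem, since $\M$ is holomorphic on $\{\mathfrak{R}(s)>-1\}$ and the $1/(1+t^2)$ decay makes the horizontal contributions vanish; this gives consistency of the formula across the admissible range of $a$.)
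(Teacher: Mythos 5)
Your proposal matches the paper's approach exactly: the paper likewise proves this corollary by simply invoking \cite{Mellin}*{Theorem 2.1.1}, without spelling out the hypothesis verification. The extra work you do (checking continuity of $\eta$, and using Lemma~\ref{Maj}(1) together with the bounds in \eqref{support} to establish absolute integrability of $\M(a+it)$ in $t$) is correct and is precisely the justification the paper leaves implicit.
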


Note that under these hypotheses on $\eta$, for any Dirichlet character $\chi$ we can define $\psi_\eta (x,\chi) $ by \eqref{psi}, and it is a continuous function on $\R_{>0}$. 
\begin{lem}\label{redprimLemma}
   Let $\eta \in \mathcal{S}$ and let $\chi$ be a Dirichlet character modulo $q$ with conductor $q_\chi$, and let $\chi^*$ be the primitive character modulo $q_\chi$ inducing $\chi$. Then \begin{equation}\label{redprim}
        \psi_\eta\left(x,\chi^*\right)-\psi_\eta(x,\chi) \ll_\eta \log q \, .
    \end{equation} 
\end{lem}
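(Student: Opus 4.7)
The plan is to start from the defining series for $\psi_\eta$ and analyze the difference term by term. For a prime power $n = p^k$ we have $\chi^*(p^k) = \chi(p^k)$ unless $p \mid q$ and $p \nmid q_\chi$; in that case $\chi(p^k) = 0$, while $\chi^*(p^k) = \chi^*(p)^k$ is a root of unity. Expanding $\Lambda$ along prime powers therefore yields
\begin{equation*}
\psi_\eta(x,\chi^*) - \psi_\eta(x,\chi) = \sum_{\substack{p \mid q \\ p \nmid q_\chi}} \log p \sum_{k \geq 1} \chi^*(p)^k \, \eta\!\left(\frac{p^k}{x}\right),
\end{equation*}
and the triangle inequality together with $|\chi^*(p)| \leq 1$ gives $|\psi_\eta(x,\chi^*) - \psi_\eta(x,\chi)| \leq \sum_{p \mid q} \log p \cdot T_p(x)$, where $T_p(x) := \sum_{k \geq 1} \eta(p^k/x)$.

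The next step is to show $T_p(x) \ll_\eta 1$ uniformly in $p$ and $x$. Taking $k = 0$ and $j = 1$ in \eqref{support} gives $\eta(t) \ll_\eta \min(t,\, 1/t)$ for all $t > 0$. Split the sum at the unique index $K = K(p,x)$ with $p^K \leq x < p^{K+1}$. For $k \leq K$ apply the bound $\eta(p^k/x) \ll p^k/x$ and sum geometrically to obtain $\sum_{k \leq K} p^k/x \leq \frac{p^{K+1}}{(p-1)x} \leq \frac{p}{p-1} \leq 2$. For $k \geq K+1$ apply $\eta(p^k/x) \ll x/p^k$ and sum to get $\sum_{k \geq K+1} x/p^k \leq \frac{x}{p^{K+1}} \cdot \frac{p}{p-1} \leq 2$. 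Hence $T_p(x) \ll_\eta 1$ with an implied constant independent of $p$ and $x$.

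Combining the two estimates,
\begin{equation*}
|\psi_\eta(x,\chi^*) - \psi_\eta(x,\chi)| \ll_\eta \sum_{p \mid q} \log p \leq \log q,
\end{equation*}
which is exactly \eqref{redprim}. The only subtle point is the uniform boundedness of $T_p(x)$; this is where the vanishing condition $\eta(t) \ll t$ near $t = 0$ built into \eqref{support} plays an essential role, since otherwise the small-$k$ range could a priori contribute a factor proportional to $\log x / \log p$, which would be far too large to yield the clean $\log q$ saving.
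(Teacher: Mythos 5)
Your proof is correct and follows essentially the same approach as the paper: reduce the difference to primes $p \mid q$ with $p \nmid q_\chi$, show $\sum_{k\geq 1}\eta(p^k/x)\ll_\eta 1$ by splitting at $p^K\leq x< p^{K+1}$ and using the two-sided decay from \eqref{support}, and then sum $\log p$ over $p\mid q$. One minor remark: the paper's displayed formula has a typo (it writes $\chi(p^k)$ where $\chi^*(p^k)$ is meant, as $\chi$ vanishes on those prime powers); your version has the correct character.
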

\begin{proof}
    
    For a prime $p$, by \eqref{support}, we have: 
    \begin{align*}
        \sum_{k\geq 1} \eta\left(\frac{p^k}{x}\right) &\ll_\eta \sum_{\substack{k\geq 1 \\ p ^k \leq x}} \frac{p^k}{x}+\sum_{\substack{k\geq 1\\ p^k >x}} \frac{x}{p^k} \\
        &\ll \sum_{k\geq 0}\frac{1}{p^k} \ll 1 \, .
    \end{align*}
    Thus \begin{align*}
        \psi_\eta(x,\chi^*)-\psi_\eta(x,\chi) &= \sum_{\substack{p|q \\ (p,q_\chi)=1}} \log p \sum_{k\geq 1} \eta\left(\frac{p^k}{x}\right)\chi(p^k) \\
        &\ll_\eta  \sum_{\substack{p|q }} \log p \leq \log q \, .
    \end{align*}
\end{proof}
The following Proposition is a smooth version of Perron's formula: 
\begin{prop}\label{InverseMellin}
Let $\eta \in \mathcal{S}$ and let $\chi$ be a Dirichlet character modulo $q$.
\begin{enumerate}
    \item For $s\in \C$ with $\mathfrak{R}(s)>1$ we have: \begin{equation}\label{15}
        -\frac{L'}{L}(s,\chi) \M(s)=\int_0^{+\infty}\psi_\eta(x,\chi)x^{-(s+1)}dx \, .
    \end{equation}
    \item For all $x\geq 1$ we have: \begin{equation}\label{16}
        \psi_\eta(x,\chi)=-\frac{1}{2i\pi}\int_{2-i\infty}^{2+i\infty}\frac{L'}{L}(s,\chi)\M(s)x^sds \, .
    \end{equation}
\end{enumerate}
    
\end{prop}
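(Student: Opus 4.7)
The plan is to prove both parts by starting from the Dirichlet series expansion $-\frac{L'}{L}(s,\chi)=\sum_{n\geq 1}\Lambda(n)\chi(n)n^{-s}$, valid absolutely for $\mathfrak{R}(s)>1$, and combining it with the Mellin pair $(\eta,\mathcal{M}\eta)$. The whole proof is essentially one Fubini interchange for each part, with the convergence furnished by Lemma~\ref{Maj} and the decay assumptions~\eqref{support} on $\eta$.

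For part (1), I would write the right-hand side as
\begin{equation*}
\int_0^{+\infty}\psi_\eta(x,\chi)x^{-(s+1)}dx=\int_0^{+\infty}\sum_{n\geq 1}\Lambda(n)\chi(n)\eta\!\left(\tfrac{n}{x}\right)x^{-(s+1)}dx,
\end{equation*}
and, after bringing the sum outside, make the change of variable $t=n/x$ in the inner integral, so that $x^{-(s+1)}dx$ becomes $n^{-s}t^{s-1}dt$. This produces
\begin{equation*}
\sum_{n\geq 1}\Lambda(n)\chi(n)n^{-s}\int_0^{+\infty}\eta(t)t^{s-1}dt = -\frac{L'}{L}(s,\chi)\,\mathcal{M}\eta(s),
\end{equation*}
which is \eqref{15}. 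To justify Fubini for $\mathfrak{R}(s)=\sigma>1$ I would estimate $|\eta(n/x)|\ll \min\{n/x,(n/x)^{-1}\}$ (coming from \eqref{support} with $k=0$ and $j=1$), which makes $\int_0^{+\infty}|\eta(n/x)|x^{-\sigma-1}dx\ll n^{-\sigma}$; together with $\sum_n \Lambda(n)n^{-\sigma}<\infty$ this gives absolute convergence of the double integral/sum.

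For part (2), I would plug the Mellin inversion formula of Corollary~\ref{InverseMellinFormula} into the definition of $\psi_\eta$. Taking $a=2$, the inversion formula applied to $\eta(n/x)$ gives
\begin{equation*}
\psi_\eta(x,\chi)=\sum_{n\geq 1}\Lambda(n)\chi(n)\cdot\frac{1}{2i\pi}\int_{2-i\infty}^{2+i\infty}\mathcal{M}\eta(s)\left(\tfrac{n}{x}\right)^{-s}ds,
\end{equation*}
and swapping the sum and integral produces exactly \eqref{16}. The swap is legitimate because on the line $\mathfrak{R}(s)=2$, Lemma~\ref{Maj}(2) yields $|\mathcal{M}\eta(s)|\ll_\eta |s|^{-2}$, so the integrand has absolutely convergent double integral: $\sum_n \Lambda(n)(n/x)^{-2}\cdot\int_{-\infty}^{+\infty}\frac{dt}{1+t^2}<\infty$ for any fixed $x\geq 1$.

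The main technical point, and the only real obstacle, is verifying absolute convergence so that Fubini applies; once Lemma~\ref{Maj} and the decay bounds in \eqref{support} are invoked this is essentially bookkeeping. No analytic continuation or contour shifting is required at this stage because both identities are stated on the abscissa $\sigma=2$, safely to the right of the critical strip.
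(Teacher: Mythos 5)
Your overall strategy is the same as the paper's: for part (1) you turn the Mellin transform into an $x$-integral via the substitution $t=n/x$ and then interchange sum and integral, and for part (2) you apply the inversion formula of Corollary~\ref{InverseMellinFormula} term by term and again invoke Fubini. Part (2) is correct as written.

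There is, however, a genuine gap in the convergence justification for part (1). You claim that $|\eta(n/x)|\ll\min\{n/x,(n/x)^{-1}\}$ (from~\eqref{support} with $k=0$, $j=1$) already gives $\int_0^{+\infty}|\eta(n/x)|\,x^{-\sigma-1}\,dx\ll n^{-\sigma}$. This is false for $\sigma>1$: on the range $0<x\le n$ the bound $\eta(n/x)\ll x/n$ yields
\begin{equation*}
\int_0^{n}\frac{x}{n}\,x^{-\sigma-1}\,dx=\frac{1}{n}\int_0^{n}x^{-\sigma}\,dx\, ,
\end{equation*}
and $x^{-\sigma}$ is not integrable at $0$ when $\sigma>1$, so the integral diverges. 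The decay $\eta(t)\ll t^{-1}$ is simply too weak near $x=0$ (where $t=n/x$ is large). You need the full strength of the hypothesis $\eta^{(0)}(t)\ll_j t^{-j}$ for arbitrary $j$, choosing $j$ as a function of the fixed $\sigma$ — e.g.\ $j=\sigma+1$ as in the paper, which gives $\eta(n/x)\ll(x/n)^{\sigma+1}$ on $(0,n]$ and hence $\int_0^{n}\eta(n/x)x^{-\sigma-1}\,dx\ll n^{-\sigma-1}\cdot n=n^{-\sigma}$. On $[n,\infty)$ the boundedness of $\eta$ already suffices. With this replacement your argument matches the paper's and closes the gap; as it stands, the stated estimate is not a consequence of your chosen bound.
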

\begin{proof}
Fix $s\in \C$ with $\sigma:=\mathfrak{R}(s)>1$. We start by noticing that for all $n\geq1$, a change of variables implies that: \begin{equation*}
            \M(s)=\int_0^{+\infty}\eta\left(\frac{n}{x}\right)\frac{n^s}{x^{s+1}}dx \, .
        \end{equation*}
Thus, \begin{align*}
          \sum_{n\geq1}\frac{\chi(n)\Lambda(n)}{n^s}\M(s)=\sum_{n\geq1}\int_0^{+\infty}\chi(n)\Lambda(n)\eta\left(\frac{n}{x}\right)\frac{1}{x^{s+1}}dx \, .
       \end{align*}
Hence, to prove \eqref{15}, it is sufficient to interchange the sum and integral.\\
To do so, it suffices to prove that: \begin{equation*}
            \sum_{n\geq1}\int_0^{+\infty}\left|\chi(n)\Lambda(n)\eta\left(\frac{n}{x}\right)\frac{1}{x^{s+1}}\right|dx\leq  \sum_{n\geq1}\log n \int_0^{+\infty}\eta\left(\frac{n}{x}\right)\frac{1}{x^{\sigma+1}}dx < +\infty \, .
        \end{equation*}
Let $n \geq 1$ and consider the integral \begin{equation}\label{twointeg}
            \int_0^{+\infty}\eta\left(\frac{n}{x}\right)\frac{1}{x^{\sigma+1}}dx=  \int_0^{n}\eta\left(\frac{n}{x}\right)\frac{1}{x^{\sigma+1}}dx+\int_n^{+\infty}\eta\left(\frac{n}{x}\right)\frac{1}{x^{\sigma+1}}dx \, .
        \end{equation}
We use the estimation $\eta(\frac{n}{x}) \ll \left(\frac{x}{n}\right)^{\sigma+1}$ to see that the first integral in \eqref{twointeg} is $$\ll_\eta \frac{1}{n^{\sigma}}\, .$$
For the second integral we only need $\eta$ to be bounded which is trivially true thanks to \eqref{support}. Hence, the second integral is $$\ll_\eta \frac{1}{n^\sigma}\, . $$
This proves \eqref{15}.\\
We now move to (2). Let $x\geq1$. By an inverse Mellin transform we have: 
\begin{align*}
            \psi_\eta(x,q)&=\sum_{n\geq1}\Lambda(n)\chi(n)\eta\left(\frac{n}{x}\right)\\
            &=\sum_{n\geq1} \chi (n) \Lambda(n) \frac{1}{2i\pi}\int_{2-i\infty}^{2+i\infty}\M(s)\left(\frac{n}{x}\right)^{-s}ds\\
            &=-\frac{1}{2i\pi}\int_{2-i\infty}^{2+i\infty}\frac{L'}{L}(s,\chi)\M(s)x^sds\, ,
\end{align*}
Where we interchanged summation and integration thanks to Lemma \ref{Maj} which implies that $$\sum_{n\geq1} \Lambda(n) \int_{2-i\infty}^{2+i\infty}\left|\M(s)\left(\frac{n}{x}\right)^{-s}\right | |ds|<+\infty \, .$$
This proves~\eqref{16}
   
\end{proof}
We are now ready to state and prove the explicit formula for our weighted prime counting functions:
\begin{prop}[Explicit formula]\label{explform}
Let $\eta \in \mathcal{S}$ and let $\chi$ be a Dirichlet character modulo $q\geq 1$. Uniformly for $x\geq 1$ we have: 
\begin{equation}\label{expl}
    \psi_\eta(x,\chi)=\mathbf{1}_{\chi=\chi_0}\cdot\M(1)x-\sum_{\rho_\chi} \M(\rho_\chi)x^{\rho_\chi}+O_\eta(\log(q+1))\, ,
\end{equation}
    where the sum runs over all non-trivial zeros of $L(s,\chi)$ and where $\chi_0$ is the principal character modulo $q$.
\end{prop}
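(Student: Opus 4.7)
The plan is to derive the explicit formula by shifting the contour in the Mellin--Barnes representation provided by Proposition~\ref{InverseMellin}. First, I would invoke Lemma~\ref{redprimLemma} to reduce to the case where $\chi$ is a primitive character modulo $q_\chi \mid q$, absorbing the switch into the permissible error $O_\eta(\log(q+1))$. The starting point is then
\begin{equation*}
\psi_\eta(x,\chi) = -\frac{1}{2i\pi}\int_{2-i\infty}^{2+i\infty}\frac{L'}{L}(s,\chi)\,\M(s)\,x^s\,ds,
\end{equation*}
and the goal is to push this contour leftward to the line $\Re(s)=-1/2$, collecting residues along the way.

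Next, I apply the residue theorem on rectangles $[-1/2,2]\times[-iT,iT]$, choosing a sequence $T_n\to +\infty$ that avoids ordinates of non-trivial zeros (via the standard gap $\gg 1/\log(qT)$ between ordinates). The poles of the integrand inside are: (i) a simple pole at $s=1$ when $\chi=\chi_0$, with residue $\M(1)x$; (ii) simple poles at the non-trivial zeros $\rho_\chi$ of $L(s,\chi)$ with $|\Im(\rho_\chi)|<T$, each contributing $-\M(\rho_\chi)x^{\rho_\chi}$ (with multiplicity); (iii) a possible pole at $s=0$ coming from a trivial zero of $L(s,\chi)$ (when $\chi$ is even and non-principal), where the local Laurent expansion $\frac{L'}{L}(s,\chi)=\frac{m_0}{s}+O(\log q)$ combined with $\M(s)x^s=\M(0)+O(|s|\log x)$ near $s=0$ shows the residue is $O_\eta(\log(q+1))$.

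To control the remaining contour pieces, on the horizontal segments $\Im(s)=\pm T$ I use Lemma~\ref{Maj}(2), giving $|\M(s)|\ll_\eta 1/T^2$, together with the classical bound $|\tfrac{L'}{L}(\sigma\pm iT,\chi)|\ll \log^2(qT)$ valid along the chosen $T_n$; these pieces vanish as $T_n\to +\infty$. On the left vertical segment $\Re(s)=-1/2$, the functional equation together with Stirling yields $|\tfrac{L'}{L}(-\tfrac12+it,\chi)|\ll \log(q(|t|+2))$, which combined with $|\M(s)|\ll_\eta 1/|s|^2$ gives a total contribution of $O_\eta(\log(q+1))$. Extending the sum to all non-trivial zeros is then harmless, since by Lemma~\ref{Maj}(2) we have $|\M(\rho)|\leq B_\eta/|\rho|^2$ and the zero-counting bound $N(T,\chi)\ll T\log(qT)$ makes the tail $\sum_{|\Im(\rho)|>T_n}|\M(\rho)|x^{\Re(\rho)}$ tend to zero.

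The main obstacle I anticipate is the uniform-in-$q$ control of $L'/L$ on the left vertical line $\Re(s)=-1/2$ and on the horizontal segments; this is standard but requires care with the functional equation for primitive $\chi$ (with the correct parity-dependent Gamma factor) and with the removal of the contribution at $s=0$, which must be shown to be absorbable into $O_\eta(\log(q+1))$ rather than growing with $x$. Once these estimates are in place, summing the collected residues produces exactly the claimed formula~\eqref{expl}.
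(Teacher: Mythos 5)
Your proof is correct and follows essentially the same approach as the paper: starting from the inverse Mellin representation in Proposition~\ref{InverseMellin}, shifting the contour to $\Re(s)=-\tfrac12$, collecting residues at $s=1$ (when $\chi$ is principal), at the non-trivial zeros, and at the trivial zero $s=0$ for even primitive non-principal $\chi$ (which you correctly identify as the even case; the paper's prose mistakenly says ``odd'', though its formula $(1-\kappa)\M(0)$ with $\kappa=0$ for even $\chi$ is right), then bounding the horizontal and left-vertical pieces via Lemma~\ref{Maj}(2) together with standard $L'/L$ estimates, and finally reducing to primitive characters via Lemma~\ref{redprimLemma}. The only cosmetic difference is that the paper invokes \cite{MV}*{Lemma 12.7} to pick a good horizontal ordinate $T_1\in[T,T+1]$, while you select $T_n$ avoiding ordinates via the gap argument; both devices serve the same purpose.
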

\begin{proof}
    First assume that $\chi$ is primitive.
    By Proposition \ref{InverseMellin}(2) we have for all $x\geq 1$ : \begin{equation}
        \psi_\eta(x,\chi)=-\frac{1}{2i\pi}\int_{2-i\infty}^{2+i\infty}\frac{L'}{L}(s,\chi)\M(s)x^sds \, .
    \end{equation}
    Denote $\kappa$ to be $0$ if $\chi$ is even and $1$ if $\chi$ is odd. Let $T\geq2$ and $N$ a large positive integer which is even if $\chi$ is odd and odd if $\chi$ is even. 
    Denote $$f_{\chi}(s)=\frac{L'}{L}(s,\chi)\M(s)x^s\, .$$
    By \cite{MV}*{Lemma 12.7} there exists $T\leq T_1\leq T+1$ such that uniformly for $-1\leq \sigma \leq 2$ $$\frac{L'}{L}(\sigma+iT_1)\ll \log(qT)^2 \, .$$

We consider the rectangular contour $\mathcal{C}$ from \(2 - iT_1\) to \(-\frac{1}{2}- iT_1\), then from \(-\frac{1}{2} - iT_1\) to \(-\frac{1}{2} + iT_1\), next from \(-\frac{1}{2} + iT_1\) to \(2 + iT_1\), and finally from \(2 + iT_1\) back to \(2 - iT_1\) (which is negatively oriented). Given that the poles of \(f_\chi(s)\) inside this contour are the non-trivial zeros \(\rho_\chi\) of \(L(s, \chi)\) satisfying \(\mathfrak{I}(\rho_\chi) < T_1\) and an additional pole at $s=0$ when $\chi$ is odd, the application of the residue Theorem yields:

\[
\frac{1}{2i\pi}\oint_{\mathcal{C}} f_\chi(s)\, ds=-\left(-\mathbf{1}_{\chi=\chi_0}\cdot\M(1)x+\sum_{\substack{\rho_\chi \\ \mathfrak{I}(\rho_\chi) < T_1}} \M(\rho_\chi) x^{\rho_\chi} + (1-\kappa) \M(0) \right)\, .
\]
By \cite{MV}*{Lemma 12.7} and Lemma \ref{Maj}(2) we have $$\left |\int_{2-iT_1}^{-\frac{1}{2}-iT_1}f_\chi(s)ds\right| \leq \int_{-\frac{1}{2}}^2\left|  \frac{L'}{L}(t+iT_1,\chi)\M(s)\right|x^tdt\ll_\eta \left(\frac{\log qT}{T}x\right)^2 \, .$$
Similarly, $$\int_{2+iT_1}^{-\frac{1}{2}+iT_1}f_\chi(s)ds\ll_\eta \left(\frac{\log qT}{T}x\right)^2\, .$$
By \cite{MV}*{Lemma 12.6}, we see that for all $-T_1\leq t \leq T_1$ we have $\frac{L'}{L}(-\frac{1}{2}+it)\ll \log(q(|t|+2)$. Thus,  by Lemma \ref{Maj}(2) $$\int_{-\frac{1}{2}-iT_1}^{-\frac{1}{2}+iT_1}\frac{L'}{L}(s,\chi)\M(s)x^s ds\ll \frac{B_\eta}{\sqrt{x}}\int_{-T_1}^{T_1}\frac{\log(q(|t|+2))}{\frac{1}{4}+t^2}dt \ll B_\eta \log (q+1)\, ,$$
where $B_\eta$ was defined in Lemma~\ref{Maj}(2).\\
Thus, \begin{equation*}
    -\frac{1}{2i\pi}\int_{2-iT_1}^{2+iT_1}f_{\chi}(s)ds=\mathbf{1}_{\chi=\chi_0}\cdot\M(1)x-\sum_{\substack{\rho_\chi \\ \mathfrak{I}(\rho_\chi) < T_1}} \M(\rho_\chi) x^{\rho_\chi} +O_\eta\left( \log (q+1) +\left(x\frac{\log qT}{T}\right)^2 \right)\, .
\end{equation*}
Now letting $T \to \infty$ (which implies $T_1\to \infty$) gives \begin{equation*}
    \psi_\eta(x,\chi)=\mathbf{1}_{\chi=\chi_0}\cdot\M(1)x-\sum_{\rho_\chi} \M(\rho_\chi)x^{\rho_\chi}+O_\eta( \log (q+1))\, ,
\end{equation*}
when $\chi$ is primitive. For the general case, it suffices to apply Lemma~\ref{redprimLemma} to obtain the result.
\end{proof}
Applying the explicit formula requires an understanding of the distribution of zeros on the critical strip. De La Bretèche and Fiorilli \cite{BF2}*{Hypothesis $GRH_{\widehat{\eta}}$} stated a hypothesis slightly weaker than the Generalized Riemann hypothesis that emerges naturally when employing the explicit formula. We state a hypothesis that is very similar to theirs:
\begin{Hyp}\label{Hyp GRH}
    One says that $GRH_\eta$ holds if for any non-trivial zero $\rho_\chi$ of $L(s,\chi)$, with $\chi$ a non-principal Dirichlet character modulo $q\geq3$, such that $\mathfrak{R}(\rho_\chi) > \frac{1}{2}$ we have $\M(\rho_\chi)=0$.
\end{Hyp}

We first prove that $GRH_\eta$ implies that in compact sets there can only be finitely many exceptions to the Generalized Riemann Hypothesis.
\begin{lem}\label{Riemann-Hyp}
Let $\eta \in \mathcal{S}$, let $D>0$ and let \begin{equation*}
    \Gamma:=\left \{s\in \C\ :\ 0\leq \mathfrak{R}(s) \leq 1 \text{ with } \ \mathfrak{R}(s) \ne \frac{1}{2} \text{ and }-D \leq \mathfrak{I}(s) \leq D \right \} \, .
\end{equation*} Denote $Z$ to be the set of non-trivial zeros of all Dirichlet L-functions. If $GRH_\eta$ holds then $\Gamma \cap Z$ is finite. Moreover, there exists $0<\beta_\eta<\frac{1}{2}$ such that uniformly for $q\geq 3$ we have \begin{equation*}
        \sum_{\chi \in \mathcal{X}_q}\sum_{\rho_\chi \in \Gamma}|\M(\rho_\chi)|\ll_{ \Gamma,\eta}q^{\frac{3\beta_\eta}{1+\beta_\eta}}(\log q)^9 \, .
    \end{equation*} 
\end{lem}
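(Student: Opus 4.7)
The plan combines two ingredients: the Mellin transform $\M$ is holomorphic and nonzero on $\{\Re(s)>-1\}$ (which supplies finiteness), and the Ingham--Huxley-type zero-density estimate of \cite{Topics}*{Theorem 12.1} (which supplies the sum bound). The key observation is that the density-theorem exponent $3(1-\alpha)/(2-\alpha)$ transforms under the functional equation into precisely $3\beta_\eta/(1+\beta_\eta)$.

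I start with the finiteness of $\Gamma\cap Z$. The identity $\M(1)=\int_0^{+\infty}\eta(t)\,dt>0$ (which follows from the continuity of $\eta\geq 0$ with $\eta(1)>0$) shows that $\M$ is a nonzero holomorphic function on $\{\Re(s)>-1\}$, so the compact box $K:=\{s:\tfrac12\leq\Re(s)\leq 1,\,|\Im(s)|\leq D\}$ meets only finitely many of its zeros. Hypothesis~\ref{Hyp GRH} forces each $\rho\in\Gamma\cap Z$ with $\Re(\rho)>\tfrac12$ to lie in this finite set, and for $\rho\in\Gamma\cap Z$ with $\Re(\rho)<\tfrac12$ the functional equation for the inducing primitive character sends $\rho$ to the non-trivial zero $1-\bar\rho$ inside $K$, which is therefore also in the same finite set. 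Hence $\Gamma\cap Z$ is finite.

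For the sum, I set
\[
\beta_\eta:=\max\bigl\{\Re(\rho):\rho\in\Gamma\cap Z,\,\Re(\rho)<\tfrac12\bigr\}
\]
(or any fixed value in $(0,\tfrac12)$ if this set is empty), so that $\beta_\eta<\tfrac12$ by the finiteness just obtained. Hypothesis~\ref{Hyp GRH} kills the contribution of zeros with $\Re(\rho_\chi)>\tfrac12$, leaving
\[
\sum_{\chi\in\mathcal{X}_q}\sum_{\rho_\chi\in\Gamma}|\M(\rho_\chi)|=\sum_{\rho_0\in\mathcal{R}}|\M(\rho_0)|\cdot\#\{\chi\in\mathcal{X}_q:L(\rho_0,\chi)=0\},
\]
where $\mathcal{R}=\{\rho\in\Gamma\cap Z:\Re(\rho)<\tfrac12\}$ is finite. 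The functional equation lets me bound $\#\{\chi:L(\rho_0,\chi)=0\}$ by $\sum_{\chi\bmod q}N(1-\Re(\rho_0),D,\chi)$, and invoking \cite{Topics}*{Theorem 12.1} in the form $\sum_{\chi\bmod q}N(\alpha,T,\chi)\ll (qT)^{3(1-\alpha)/(2-\alpha)}(\log qT)^{9}$ at $\alpha=1-\Re(\rho_0)\geq 1-\beta_\eta$ and $T=D$ produces $\ll_{D} q^{3\beta_\eta/(1+\beta_\eta)}(\log q)^{9}$. Since both $|\mathcal{R}|$ and $\max_{\rho_0\in\mathcal{R}}|\M(\rho_0)|$ are $O_{\Gamma,\eta}(1)$ (the latter via Lemma~\ref{Maj}), they are absorbed into the implicit constant.

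The main delicacy I anticipate is the functional-equation bookkeeping for non-primitive characters: for $\chi\bmod q$ with primitive inducing character $\chi^*$, I must observe that the non-trivial zeros of $L(s,\chi)$ in the open critical strip $0<\Re(s)<1$ coincide with those of $L(s,\chi^*)$, because the extra Euler-factor zeros all lie on $\Re(s)=0$ and are therefore excluded from $Z$. Once this routine identification is made, the proof is essentially a brief finiteness observation plus one direct application of the density theorem.
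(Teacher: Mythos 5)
Your approach is essentially the paper's: finiteness comes from $\M$ being a nonzero holomorphic function on $\{\Re(s)>-1\}$ combined with the functional equation reflection $\rho\mapsto 1-\bar\rho$, and the sum bound comes from Montgomery's zero-density estimate applied at $\sigma=1-\Re(\rho_0)$. The only omission is in the definition of $\beta_\eta$: Theorem~12.1 of \cite{Topics} gives the exponent $3(1-\sigma)/(2-\sigma)$ with log power $9$ only for $\tfrac12\leq\sigma\leq\tfrac45$, while for $\sigma>\tfrac45$ it gives the different exponent $2(1-\sigma)/\sigma$ and a larger log power. So if the set $\{\Re(\rho):\rho\in\Gamma\cap Z,\;\Re(\rho)<\tfrac12\}$ has maximum below $\tfrac15$ (or is empty and you pick a value in $(0,\tfrac15)$), your invocation of the theorem at $\alpha=1-\Re(\rho_0)>\tfrac45$ is not in the stated range. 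The fix is simply to enlarge $\beta_\eta$ to $\max(\cdots,\tfrac15)$ so that $\sigma=1-\beta_\eta\leq\tfrac45$; since $N(\alpha,T,\chi)$ is decreasing in $\alpha$ and the bound $q^{3\beta_\eta/(1+\beta_\eta)}$ only increases with $\beta_\eta$, this costs nothing. That is exactly what the paper does when it writes ``taking $\beta_\eta<\tfrac12$ larger than the maximum of their real parts so that $\sigma_\eta=1-\beta_\eta\leq\tfrac45$.'' Everything else, including the reduction to primitive characters, is fine.
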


\begin{proof}
    Since $\M$ is a non-zero holomorphic function on $\{\frak{R}(s)>-1\}$, it can only have finitely many zeros in $\Gamma$. If $GRH_\eta$ holds, then, by the functional equation, for all zeros $\rho $ of Dirichlet L-functions with $\frak{R}(\rho)\ne \frac{1}{2}$, either $\rho$ or $1-\overline{\rho}$ is a zero of $\M$, and there can only be finitely many of them that are inside $\Gamma$. Considering only zeros $\rho$ such that $\frak{R}(s) <\frac{1}{2}$ and taking $\beta_\eta<\frac{1}{2}$ larger than the maximum of their real parts so that $\sigma_\eta=1-\beta_\eta \leq \frac{4}{5}$, we can apply \cite{Topics}*{Theorem 12.1} along with the fact that $\M$ is bounded on $\Gamma$ to obtain the desired estimate.
\end{proof}
In order to relate $V_\eta$ to $\psi_\eta$ and $G_\eta$ to $\theta_\eta$ we use the Parseval identities
\begin{align}\label{Parseval}
    V_\eta(x;q)&=\frac{1}{\phi(q)}\sum_{\chi\ne \chi_0}\left|\psi_\eta(x,\chi)\right|^2 ,\\
    G_\eta(x;q)&=\frac{1}{\phi(q)}\sum_{\chi}|\theta_\eta(x,\chi)-\mathbf{1}_{\chi=\chi_0}\M(1)x |^2 \, ,
\end{align}
which can be proven with direct computation.
The following Corollary is a consequence of Proposition~\ref{explform}. It shows that our weighted prime counting functions cannot have large oscillations and thus the variance $V_\eta$ cannot be as large as $V_\Lambda$ when $x$ is too large compared to $q$. (See \cite{FiMa}*{Theorem 1.3, equation (11)})

\begin{cor}\label{bestbounds}
    Let $\eta \in \mathcal{S}$. Assume that $GRH_\eta$ holds. \begin{enumerate}
        \item For all $q\geq3$ and for any non-principal character $\chi$ modulo $q$, we have: \begin{equation}
        \psi_\eta(x,\chi)\ll_\eta x^{\frac{1}{2}}\log q \, .
        \end{equation}
        \item Uniformly for $x\geq q\geq3$ we have: \begin{equation}
            V_\eta(x;q) \ll_\eta x (\log q)^2 \, .
        \end{equation}
    \end{enumerate} 
    
\end{cor}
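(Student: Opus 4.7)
The plan is to apply the explicit formula (Proposition~\ref{explform}) and use $GRH_\eta$ to discard the contribution of zeros to the right of the critical line, reducing the task to a routine estimate on $\sum_{\rho_\chi} |\M(\rho_\chi)|$. For a non-principal character $\chi$ modulo $q$, Proposition~\ref{explform} gives
\begin{equation*}
\psi_\eta(x, \chi) = -\sum_{\rho_\chi} \M(\rho_\chi) x^{\rho_\chi} + O_\eta(\log q),
\end{equation*}
and Hypothesis~\ref{Hyp GRH} forces $\M(\rho_\chi) = 0$ whenever $\Re(\rho_\chi) > 1/2$. For every surviving zero we have $|x^{\rho_\chi}| = x^{\Re(\rho_\chi)} \leq x^{1/2}$ (using $x\geq 1$), so it suffices to prove that $\sum_{\rho_\chi} |\M(\rho_\chi)| \ll_\eta \log q$, from which part (1) follows at once.

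To prove the zero-sum estimate I would split by the imaginary part of $\rho_\chi$. For zeros with $|\Im(\rho_\chi)| \leq 1$, the Mellin transform $\M$ is bounded on the compact set $\{0 \leq \Re(s) \leq 1,\,|\Im(s)| \leq 1\}$ since it is holomorphic on $\{\Re(s) > -1\}$, and by the standard density bound $N(T+1, \chi) - N(T, \chi) \ll \log(q(|T|+2))$ (already cited in the proof of Proposition~\ref{explform}) there are only $O(\log q)$ such zeros. For zeros with $|\Im(\rho_\chi)| > 1$, Lemma~\ref{Maj}(2) gives $|\M(\rho_\chi)| \leq B_\eta/|\rho_\chi|^2$, and a dyadic decomposition combined with $N(T, \chi) \ll T\log(qT)$ yields $\sum_{|\Im(\rho_\chi)| > 1} 1/|\rho_\chi|^2 \ll \log q$. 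Adding the two contributions gives the claimed bound and hence part (1), with the $O_\eta(\log q)$ remainder absorbed into $x^{1/2}\log q$.

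Part (2) is then immediate from the Parseval identity~\eqref{Parseval}: squaring part (1) and summing over the $\phi(q)-1$ non-principal characters gives
\begin{equation*}
V_\eta(x; q) = \frac{1}{\phi(q)} \sum_{\chi \neq \chi_0} |\psi_\eta(x, \chi)|^2 \ll_\eta x (\log q)^2.
\end{equation*}
I do not anticipate any serious obstacle here: the crude estimate $x^{\Re(\rho_\chi)} \leq x^{1/2}$ used for the off-critical-line zeros with $\Re(\rho_\chi) < 1/2$ is lossy, but it is sufficient because only an upper bound is sought. Notably, Lemma~\ref{Riemann-Hyp} is not needed for this corollary; it becomes essential instead in the $\Omega$-type results of Section~\ref{GRH}, where one must rule out the possibility that off-critical-line zeros destructively cancel the main contribution coming from the critical line.
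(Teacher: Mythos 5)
Your proposal is correct and follows essentially the same route as the paper: discard zeros with $\Re(\rho_\chi)>\tfrac12$ via $GRH_\eta$ in the explicit formula, bound $\sum_{\rho_\chi}|\M(\rho_\chi)|\ll_\eta\log q$, and conclude via Parseval. The only (cosmetic) difference is that you re-derive the zero-sum bound by splitting at $|\Im(\rho_\chi)|=1$ and dyadically decomposing, whereas the paper obtains the same bound in one step by combining Lemma~\ref{Maj}(2) with the already-stated Lemma~\ref{Riem-VM}, $\sum_{\rho_\chi}|\rho_\chi|^{-2}\ll\log q$.
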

\begin{proof}
    The first estimate is a direct consequence of the explicit formula \eqref{expl}. Indeed, if $GRH_\eta$ holds then the sum in \eqref{expl} is reduced to the sum over zeros $\rho$ such that $\frak{R}(\rho)\leq \frac{1}{2}$. Moreover, thanks to Lemma \ref{Maj} we see that $$\sum_{\rho_\chi}\M(\rho_\chi)\ll_\eta \sum_{\rho_\chi}\frac{1}{|\rho_\chi|^2}\ll \log q\, .$$
    The second estimate is obtained by combining the first assertion of the current Corollary with the Parseval identity~\eqref{Parseval}.
\end{proof}

Another important consequence of the explicit formula is the following. It will enable us to relate $\psi_\eta$ with $\theta_\eta$, thus, thanks to Parseval identities, it will allow us to restrict our study to $\psi_\eta$ and $V_\eta$ and then conclude similar results for $\theta_\eta$ and $G_\eta$.
\begin{lem}\label{psi-theta}
Let $q\geq 3$ and denote $\chi_0$ the principal character modulo $q$.
\begin{enumerate}
    \item Uniformly for $x\geq 1$ we have $$\psi_\eta(x,\chi_0)\ll\left(\M(1)+\sum_{\rho}|\M(\rho)|\right)x \, ,$$ with an absolute implicit constant and where the sum runs over non-trivial zeros of the Riemann zeta function.
    \item For all Dirichlet characters $\chi$ modulo $q$, we have uniformly for $x>\max(q,8)$  $$\psi_\eta(x,\chi)-\theta_\eta(x,\chi) \ll_\eta x^{\frac{1}{2}}\, .$$
\end{enumerate}
    
\end{lem}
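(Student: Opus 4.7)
For part (1), the plan is to exploit the monotonicity $\chi_0(n)\leq 1$ (combined with $\eta\geq 0$) to obtain the pointwise bound
\[
\psi_\eta(x,\chi_0)=\sum_{\substack{n\geq 1\\(n,q)=1}}\Lambda(n)\eta(n/x)\ \leq\ \sum_{n\geq 1}\Lambda(n)\eta(n/x)=\psi_\eta(x,\mathbf{1}),
\]
where $\mathbf{1}$ denotes the trivial character. This reduces the problem to a $q$-free estimate. I then apply Proposition~\ref{explform} to the trivial character (the case $q=1$), giving
\[
\psi_\eta(x,\mathbf{1})=\M(1)x-\sum_\rho \M(\rho)x^\rho+O_\eta(1),
\]
where the sum ranges over the non-trivial zeros of $\zeta$. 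Since $0<\mathfrak R(\rho)<1$ and $x\geq 1$, we have $|x^\rho|=x^{\mathfrak R(\rho)}\leq x$, so the triangle inequality yields $\psi_\eta(x,\mathbf{1})\leq \bigl(\M(1)+\sum_\rho|\M(\rho)|\bigr)x+O_\eta(1)$. The remainder is absorbed into the displayed main term using the positivity $\M(1)>0$ (which follows from $\eta\geq 0$, $\eta(1)>0$).

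For part (2), the plan is a direct dissection of the higher prime powers. Writing out the definitions yields
\[
\psi_\eta(x,\chi)-\theta_\eta(x,\chi)=\sum_{p}\log p\,\sum_{k\geq 2}\chi(p^k)\,\eta\!\left(\frac{p^k}{x}\right),
\]
and since $|\chi|\leq 1$ the desired bound will follow from
\[
\sum_{k\geq 2}\sum_p\log p\,\eta(p^k/x)\ll_\eta x^{1/2}.
\]
I would first isolate the $k=2$ contribution and split it at $p=\sqrt{x}$: for $p\leq\sqrt{x}$, use boundedness of $\eta$ together with Chebyshev's estimate $\sum_{p\leq y}\log p\ll y$ to obtain $\ll_\eta\sqrt{x}$; for $p>\sqrt{x}$, invoke the rapid decay $\eta(p^2/x)\ll_{\eta,j}(x/p^2)^j$ from \eqref{support} with a large enough integer $j$ and sum to again recover $\ll_\eta\sqrt{x}$. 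For each fixed $k\geq 3$ the same dyadic split at $p=x^{1/k}$ gives a contribution $\ll_\eta x^{1/k}$; summing these over $k\geq 3$ (the sum being finite because $\eta(p^k/x)$ vanishes effectively once $p^k\gg x^{1+\varepsilon}$) produces $\ll_\eta x^{1/3}$, which is dominated by $x^{1/2}$ for $x\geq 8$.

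The main obstacle is the tail estimate in part (2): one must ensure that the polynomial decay of $\eta$ is strong enough to avoid an unwanted logarithmic factor. This is handled by picking the decay exponent $j$ in \eqref{support} large enough, say $j=3$ for the $k=2$ range, so that $\sum_{p>\sqrt{x}}(\log p)(x/p^2)^j\ll_\eta x^{1/2}$ cleanly. The role of the hypothesis $x>\max(q,8)$ is essentially to guarantee $\sqrt{x}\geq 2\sqrt{2}$ so that the $O$-constant is uniform; the $q$-dependence drops out because for $p\mid q$ we have $\chi(p^k)=0$ and the sum may be restricted to $p\nmid q$, yielding an estimate that is uniform in $q$.
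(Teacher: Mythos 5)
Your part (1) matches the paper's proof in substance: both reduce to the trivial character and apply the explicit formula. Your explicit use of the pointwise inequality $\psi_\eta(x,\chi_0)\leq\psi_\eta(x,\mathbf{1})$ (rather than invoking Lemma~\ref{redprimLemma}, which would introduce an $O_\eta(\log q)$ term) is actually the cleaner way to reach a $q$-free constant, and is presumably what the paper intends by ``direct consequence.''

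For part (2) you take a genuinely different route. The paper decomposes the higher prime-power sum as $\sum_{\ell\geq 2}\theta_{\eta\circ f_\ell}(x^{1/\ell},\chi\circ f_\ell)$ and bounds each $\theta_{\eta\circ f_\ell}$ by feeding the transformed weight $\eta\circ f_\ell$ back into part~(1), i.e.\ it re-uses the explicit-formula machinery for every $\ell$ (incurring the $(\log x)^3 x^{1/3}$ intermediate bound and needing $q\leq x$ to control the zero-sums). You instead estimate each $k$ directly via Chebyshev plus the polynomial decay of $\eta$, which is more elementary, avoids any reference to $L$-function zeros beyond part~(1) for $k=2$ (and even that could be replaced by Chebyshev), and produces a bound that is in fact uniform in $q$, so the hypothesis $x>q$ is not really used in your argument. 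That is a pleasant simplification. Two small points: first, your worry about a logarithmic loss and the choice $j=3$ is unnecessary; with $\theta(y)\ll y$ one gets $\sum_{p>y}\log p\,p^{-2}\ll 1/y$ already, so $j=1$ gives $\ll_\eta\sqrt{x}$ for the $k=2$ tail, and likewise for $k\geq 3$.

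The one genuine gap is the summation over $k\geq 3$. Your per-$k$ bound is $\ll_\eta x^{1/k}$ with a constant uniform in $k$, but $\sum_{k\geq 3}x^{1/k}$ diverges, since $x^{1/k}\to 1$. The parenthetical remark that ``the sum is finite because $\eta(p^k/x)$ vanishes effectively once $p^k\gg x^{1+\varepsilon}$'' gestures at the fix but does not supply it: the bound $x^{1/k}$ simply does not reflect that decay for large $k$. To close this you need to truncate, e.g.\ restrict to $3\leq k\leq \log x/\log 2$ (where $\sum x^{1/k}\ll x^{1/3}$ since the $k\geq 4$ terms are $\leq (\log x)\,x^{1/4}\ll x^{1/3}$), and then bound the range $k>\log x/\log 2$ separately using $\eta(p^k/x)\ll x/p^k$ and a geometric series in $k$ to get an $O_\eta(1)$ contribution — exactly as the paper does for $\ell>\log x/\log 2$. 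With that truncation made explicit, your argument is complete.
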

\begin{proof}
        The first estimate is a direct consequence of the explicit formula \eqref{expl}.\\ 
        Let us prove the second estimate. Denote, for $\ell\geq 1$, $f_\ell:\R\to \R$ defined by $f(x)=x^\ell$. We first notice that $$\psi_\eta(x,\chi)-\theta_\eta(x,\chi)=\sum_{\ell\geq 2}\theta_{\eta\circ f_\ell}\left(x^{\frac{1}{\ell}},\chi \circ f_\ell\right)\, .$$ 
        Using the first estimate of the Lemma, we see that $$\theta_{\eta\circ f_2}\left(x^{\frac{1}{2}},\chi \circ f_2 \right) \ll \theta_{\eta\circ f_2}\left(x^{\frac{1}{2}},\chi_0\right)\leq \psi_{\eta \circ f_2} \left(x^{\frac{1}{2}},\chi_0\right) \ll_\eta x^{\frac{1}{2}} \, .$$
        By a change of variables, we have for all $s\in \C$ with $\frak{R}(s)\geq0$ $$\Ml(s)=\frac{1}{\ell}\int_0^{+\infty}\eta(u)u^{\frac{s}{\ell}-1}du=\frac{1}{\ell}\M\left(\frac{s}{\ell}\right) \, .$$
        Thus $$\sum_{3\leq \ell \leq \frac{\log x}{\log 2}}\Ml (1)=\sum_{3\leq \ell \leq \frac{\log x}{\log 2}}\frac{1}{\ell}\M\left(\frac{1}{\ell}\right)\ll_\eta \sum_{3\leq \ell \leq \frac{\log x}{\log 2}}1\ll \log x \, ,$$
        and $$\sum_{3\leq \ell \leq \frac{\log x}{\log 2}} \sum_{\rho_\chi} \frac{1}{\ell}\M\left(\frac{\rho_\chi}{\ell}\right)\ll_\eta \sum_{3\leq \ell \leq \frac{\log x}{\log 2}} \sum_{\rho_\chi} \frac{\ell}{|\rho_\chi|^2}\ll (\log x)^2 \log q \, .$$
        As $q\leq x$ we have by the first estimate of the current Lemma \begin{align*}\sum_{3\leq \ell \leq \frac{\log x}{\log 2}} \theta_{\eta \circ f_\ell}\left (x^{\frac{1}{\ell}}, \chi \circ f_\ell \right) &\ll_\eta \left( \sum_{3\leq \ell \leq \frac{\log x}{\log 2}} \left( \Ml(1)+\sum_{\rho_\chi} |\Ml(\rho_\chi) |\right) \right)x^{\frac{1}{3}}\\  &\ll_\eta (\log x)^3 x^{\frac{1}{3}}\ll x^{\frac{1}{2}} \, . \end{align*}
        Moreover \begin{align*}
            \sum_{\ell>\frac{\log x}{\log 2}}\theta_{\eta\circ f_\ell}\left(x^{\frac{1}{\ell}},\chi\circ f_\ell \right)&=\sum_{\ell>\frac{\log x}{\log 2}}\sum_{p\geq2} \log p\cdot \eta\left(\frac{p^\ell}{x}\right)\chi(p^\ell)\\
            &\ll \sum_{\ell>\frac{\log x}{\log 2}}\sum_{p\geq2} \log p \cdot \frac{x}{p^\ell} \\
            &=\sum_{p\geq2}\log p\cdot \frac{x}{p^{\ell_0}}\cdot\frac{p}{p-1}\, ,
        \end{align*}
        where $\ell_0:=\left[\frac{\log x}{\log 2}\right]+1$.
        Thus $$\sum_{\ell>\frac{\log x}{\log 2}}\theta_{\eta\circ f_\ell}\left(x^{\frac{1}{\ell}},\chi\circ f_\ell \right)\ll_\eta \sum_{p\geq 2} \log p \left( \frac{2}{p}\right)^{\ell_0}\ll 1 \, .$$
        This proves the second estimate.
    
\end{proof}

\section{ Case where $GRH_\eta$ does not hold}\label{NoGRH}

The goal of this section is to prove that if $GRH_\eta$ does not hold for some non-principal character modulo $q\geq 3$ then the estimates~\eqref{HConjAn} do not hold in any range $q\asymp x^{o(1)}$.
Note that thanks to Lemma \ref{InverseMellin} and Proposition \ref{explform}, all the results of \cite{FiMa}*{\S 2} hold. The following Lemma shows that when $GRH_\eta$ does not hold then $\psi_\eta(x,\chi)$ and $\theta_\eta(x,\chi)$ can have large values. We note that it can be proven as in \cite{FiMa}*{Lemma 2.1}, however in this paper we give a slightly different proof.

\begin{lem}Let $\eta \in \mathcal{S}$ and let $\chi$ be a non-principal character modulo $q\geq3$. Assume that $$\theta_\chi :=\sup\{\mathfrak{R}(\rho_\chi)\ :\ L(\rho_\chi,\chi)=0\ \text{and}\ \M(\rho_\chi)\ne 0\}>\frac{1}{2}\, .$$ 
For all $\varepsilon>0$, there exists $X_0>1$ such that for all $X>X_0$ there exists $x\in [X^{1-\varepsilon},X]$ for which : $$\frak{R}\left(\psi_\eta(x,\chi)\right)<- x^{\theta_\chi- \varepsilon} $$
   
\end{lem}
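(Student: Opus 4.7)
The plan is to apply the Kaczorowski--Pintz quantitative Landau-type theorem \cite{KP} to the real-valued function $g(x) := \Re(\psi_\eta(x,\chi))$. Since $\eta$ is real-valued, $\overline{\psi_\eta(x,\chi)} = \psi_\eta(x,\bar\chi)$, so $g(x) = \tfrac12(\psi_\eta(x,\chi)+\psi_\eta(x,\bar\chi))$, and applying Proposition~\ref{InverseMellin}(1) to both $\chi$ and $\bar\chi$ yields, for $\Re(s)>1$,
\begin{equation*}
\int_0^{+\infty} g(x)\, x^{-(s+1)}\, dx \;=\; -\tfrac{1}{2}\!\left(\tfrac{L'}{L}(s,\chi) + \tfrac{L'}{L}(s,\bar\chi)\right)\!\M(s) \;=:\; G(s).
\end{equation*}
The function $G$ extends meromorphically to $\{\Re(s)>-1\}$, its singularities in this half-plane being simple poles at those non-trivial zeros $\rho$ of $L(\cdot,\chi)L(\cdot,\bar\chi)$ at which $\M(\rho)\ne 0$.

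Next I would produce a pole of $G$ arbitrarily close to the line $\Re(s)=\theta_\chi$. By definition of $\theta_\chi$, for every $\delta>0$ there exists a non-trivial zero $\rho_0$ of $L(\cdot,\chi)$ with $\Re(\rho_0)>\theta_\chi-\delta$ and $\M(\rho_0)\ne 0$; then $\bar\rho_0$ is a zero of $L(\cdot,\bar\chi)$ satisfying $\M(\bar\rho_0)=\overline{\M(\rho_0)}\ne 0$. Hence $G$ has a conjugate pair of poles on the vertical line $\Re(s)=\Re(\rho_0)>\theta_\chi-\delta$, so the supremum of the real parts of the poles of $G$ equals $\theta_\chi>\tfrac12$. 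Moreover, the decay conditions \eqref{support} on $\eta$ give the crude polynomial bound $g(x)\ll_\eta x$, obtained by splitting $\sum_n \Lambda(n)|\eta(n/x)|$ into the ranges $n\le x$ (where $\eta(n/x)\ll n/x$) and $n>x$ (where $\eta$ decays rapidly).

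I would then invoke the Kaczorowski--Pintz theorem in the following form: a real-valued function $g$ of polynomial growth whose Mellin transform extends meromorphically past its abscissa of convergence, with a pole of real part $\alpha$, must satisfy $g(x) < -x^{\alpha-\varepsilon'}$ for some $x\in [X^{1-\varepsilon'},X]$, for every sufficiently large $X$. Applying this with $\alpha = \Re(\rho_0) > \theta_\chi - \delta$ and choosing $\delta$ and $\varepsilon'$ each at most $\varepsilon/2$ produces a point $x \in [X^{1-\varepsilon/2}, X] \subset [X^{1-\varepsilon}, X]$ with $\Re(\psi_\eta(x,\chi)) < -x^{\theta_\chi - \varepsilon}$, which is exactly the conclusion.

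The main obstacle is isolating specifically the $\Omega_-$ direction, rather than an unsigned $\Omega$ estimate, on the short interval $[X^{1-\varepsilon},X]$. This is precisely the refinement Kaczorowski--Pintz provides over Landau's classical theorem: since $-g$ is itself a real-valued function of polynomial growth whose Mellin transform $-G$ has the same family of pole real parts as $G$ (only the signs of the residues flip), the theorem can be applied to both $g$ and $-g$, yielding oscillations of both signs with the same quantitative strength on intervals of the same shape. Controlling the interplay between the conjugate pair $\rho_0, \bar\rho_0$ and the other poles of $G$ farther to the left is the technical heart of the Kaczorowski--Pintz argument, and once invoked as a black box it delivers the asserted one-sided bound.
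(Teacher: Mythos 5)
Your setup matches the paper's: the function you call $G(s)=-\tfrac{1}{2}\bigl(\tfrac{L'}{L}(s,\chi)+\tfrac{L'}{L}(s,\bar\chi)\bigr)\M(s)$ is exactly the function the paper denotes $F(s)$, and you correctly identify its poles and the fact that their real parts approach $\theta_\chi$. The gap is in the form of the Kaczorowski--Pintz theorem you invoke. As you have stated it --- ``a real-valued function of polynomial growth whose Mellin transform extends meromorphically with a pole of real part $\alpha$ must satisfy $g(x)<-x^{\alpha-\varepsilon'}$ somewhere in $[X^{1-\varepsilon'},X]$'' --- the theorem is false: take $g(x)=x^{3/4}$, whose Mellin transform $\int_1^\infty g(x)x^{-s-1}\,dx = \tfrac{1}{s-3/4}$ has a pole at $s=3/4$, yet $g$ is never negative. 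What Kaczorowski--Pintz actually delivers is a one-sided $\Omega_-$ bound only under a sign condition on the residue at the dominant \emph{real} singularity (nonpositive gives $\Omega_-$, nonnegative gives $\Omega_+$, zero gives $\Omega_\pm$). Your attempted fix in the final paragraph --- applying the theorem to both $g$ and $-g$ --- cannot recover the sign: passing from $g$ to $-g$ flips every residue, so if the theorem ignored residue signs it would return the same conclusion for both, which is exactly the signless statement you are trying to avoid.

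The ingredient you are missing is the sign bookkeeping that the paper performs before invoking Kaczorowski--Pintz. For real $\sigma>-1$ one has $\M(\sigma)=\int_0^\infty \eta(t)t^{\sigma-1}\,dt>0$, so every real zero $\sigma\in[\tfrac12,1)$ of $L(\cdot,\chi)$ gives your $G$ (the paper's $F$) a simple pole with \emph{negative} residue $m_\sigma$. The paper subtracts these off, setting $\widetilde G(s)=F(s)-\sum_{\sigma\in\mathcal{P}} \tfrac{m_\sigma}{s-\sigma}$, equivalently replacing $\mathfrak{R}(\psi_\eta(x,\chi))$ by $\mathfrak{R}(\psi_\eta(x,\chi))-\sum_{\sigma\in\mathcal{P}}m_\sigma x^\sigma$. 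The subtracted function has no real poles in $[\tfrac12,1)$, so the Kaczorowski--Pintz residue hypothesis is met with value $0$ and the theorem gives $\mathfrak{R}(\psi_\eta(x,\chi))-\sum_\sigma m_\sigma x^\sigma<-x^{\theta'_\chi-\varepsilon}$ on the required short interval; since each $m_\sigma<0$, the subtracted terms only reinforce the negativity, and combining with $\theta_\chi=\max\bigl(\mathcal{P}\cup\{\theta'_\chi\}\bigr)$ yields the claimed $-x^{\theta_\chi-\varepsilon}$. Without this sign analysis --- either the subtraction, or at least an explicit verification that the dominant real singularity of $F$ has nonpositive residue --- the one-sided bound does not follow from the black box as you have stated it.
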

\begin{proof}
Denote : $$F(s):=\frac{-\M(s)}{2}\left(\frac{L'}{L}(s,\chi)+\frac{L'}{L}(s,\overline{\chi})\right)\, .$$
First notice that $\M(\sigma)>0$ for $\sigma \in \R$ and thus any pole $\sigma \in \R$ of $F$ is simple with a negative residue that we denote $m_\sigma:=\text{Res}_{s=\sigma}F(s)$. Denoting $$\mathcal{P}:=\{\sigma \in [\tfrac{1}{2},1[\ :\ \sigma\text{ is a pole of }F\}\, ,$$
which is finite, we can define: $$G(s):=F(s)-\sum_{\sigma \in \mathcal{P}}\frac{m_\sigma}{s-\sigma}\, .$$
By Lemma \ref{InverseMellin} we have for all $\mathfrak{R}(s)>1$ : $$ F(s)=\int_0^{+\infty}\mathfrak{R}(\psi_\eta(x,\chi))x^{-(s+1)}dx\, .$$
Thus, for all $\mathfrak{R}(s)>1$ $$G(s)=\int_0^{+\infty}\left(\mathfrak{R}(\psi_\eta(x,\chi))-\mathbf{1}_{[1,+\infty)}(x)\left(\sum_{\sigma \in \mathcal{P}}m_\sigma x^\sigma \right)\right)x^{-(s+1)}dx$$
Denote $$\theta_\chi':=\sup\{\mathfrak{R}(\rho_\chi)\ :\ \rho_\chi \text{ is a pole of } G\}\geq\frac{1}{2}\, .$$
Let $\varepsilon>0$, by \cite{KP}*{Theorem 1} there exists  $X_0>1$ such that for all $X>X_0$ there exists $x\in [X^{1-\varepsilon},X]$ for which $$\mathfrak{R}(\psi_\eta(x,\chi))-\left(\sum_{\sigma \in \mathcal{P}}m_\sigma x^\sigma \right)<-x^{\theta_\chi'-\varepsilon}\, .$$
Since for all $\sigma \in \mathcal{P}$ we have $m_\sigma < 0$, then noting that $\theta_\chi=\max(\mathcal{P}\cup \{\theta'_\chi\})$, we have (up to considering a larger $X_0$):
$$\mathfrak{R}(\psi_\eta(x,\chi))<-x^{\theta_\chi-\varepsilon}\, .$$

\end{proof}

Thanks to the previous Lemma, the following proposition is similar to \cite{FiMa}*{Proposition 2.3} and can be proven in the exact same way. 

\begin{prop}\label{GRH-False} Let $\eta \in \mathcal{S}$.
    If $GRH_\eta$ does not hold, then there exists $\delta>0$ and $q_0\geq 3 $ (both depending on $\eta$) such that for all increasing functions $h:\R_{>0}\to\R_{>0}$ satisfying $\underset{x\to +\infty}{\lim} h(x)=+\infty$ and $h(x)=o(x^{\delta})$, as $x\to \infty$, and for all large multiple $q$ of $q_0$, there exists $x_q \in [h^{-1}(q), h^{-1}(q) ^{\frac{1}{1-\delta}}]$ such that $$V_\eta(x_q;q) \geq x_q ^{1+\delta}\,.$$
    The same statement holds for $G_\eta(x_q; q)$ in place of $V_\eta(x_q;q)$.
\end{prop}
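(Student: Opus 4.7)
The plan is to combine the preceding lemma with the Parseval identity to produce a single non-principal character modulo $q$ whose contribution to $V_\eta(x_q;q)$ already exceeds $x_q^{1+\delta}$.

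First I would fix data independent of $q$. Since $GRH_\eta$ fails, there is a primitive non-principal character $\chi^*$ of some modulus $q_0\geq 3$ whose $L$-function has a zero $\rho$ with $\Re(\rho)>\tfrac12$ and $\mathcal{M}\eta(\rho)\neq 0$; consequently $\theta:=\theta_{\chi^*}>\tfrac12$ in the notation of the preceding lemma. I then set $\delta:=\min\bigl((\theta-\tfrac12)/4,\tfrac14\bigr)>0$, which controls both the exponent gap and the growth condition on $h$.

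Now let $h$ satisfy $h(x)=o(x^\delta)$ and let $q$ be a large multiple of $q_0$. Put $X:=h^{-1}(q)^{1/(1-\delta)}$ (well defined because the hypothesis on $h$ forces $h^{-1}(q)\to\infty$). Applying the preceding lemma to $\chi^*$ with $\varepsilon=\delta$, I obtain $x_q\in[X^{1-\delta},X]=[h^{-1}(q),h^{-1}(q)^{1/(1-\delta)}]$ with $\Re\bigl(\psi_\eta(x_q,\chi^*)\bigr)<-x_q^{\theta-\delta}$. Since $x_q\geq h^{-1}(q)$ implies $q\leq h(x_q)\leq x_q^\delta$ for large $q$, the induced character $\chi$ modulo $q$ satisfies, by Lemma~\ref{redprimLemma},
\begin{equation*}
|\psi_\eta(x_q,\chi)|\geq |\Re\psi_\eta(x_q,\chi^*)|-O_\eta(\log q)\geq \tfrac12 x_q^{\theta-\delta},
\end{equation*}
because $\log q\leq \delta\log x_q=o(x_q^{\theta-\delta})$.

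Dropping every other character in the Parseval identity \eqref{Parseval} gives
\begin{equation*}
V_\eta(x_q;q)\geq \frac{|\psi_\eta(x_q,\chi)|^2}{\phi(q)}\geq \frac{x_q^{2\theta-2\delta}}{4\phi(q)}\geq \frac{x_q^{2\theta-2\delta}}{4x_q^{\delta}}=\tfrac14 x_q^{2\theta-3\delta},
\end{equation*}
and by the choice of $\delta$ the exponent $2\theta-3\delta$ exceeds $1+\delta$, yielding $V_\eta(x_q;q)\geq x_q^{1+\delta}$ once $x_q$ is large enough. Finally, to transfer the bound to $G_\eta$ I would use Lemma~\ref{psi-theta}(2), which says $\psi_\eta-\theta_\eta\ll_\eta x^{1/2}$: since $x_q^{1/2}$ is negligible compared with $x_q^{\theta-\delta}$, the same chain of inequalities applied through the Parseval identity for $G_\eta$ gives the identical conclusion.

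The main obstacle is purely bookkeeping: one must choose $\delta$ small enough, in a way depending only on $\eta$ (through $\theta$), so that simultaneously $(i)$ the range $[X^{1-\delta},X]$ produced by the Kaczorowski–Pintz style lemma coincides with the range $[h^{-1}(q),h^{-1}(q)^{1/(1-\delta)}]$ required in the statement, $(ii)$ the error $O_\eta(\log q)$ from passing to the primitive character is absorbed, and $(iii)$ the factor $\phi(q)\leq x_q^\delta$ does not swallow the power gain. The assumption $h(x)=o(x^\delta)$ is exactly what makes all three items compatible.
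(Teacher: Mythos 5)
Your proof is correct and follows the same strategy the paper appeals to, namely the argument of Fiorilli--Martin's Proposition 2.3: use the Kaczorowski--Pintz lemma to locate $x_q$ in the prescribed window where $\Re\psi_\eta(x_q,\chi^*)$ is large and negative, transfer to the induced character via Lemma~\ref{redprimLemma}, drop all but that one term in Parseval, and absorb $\phi(q)\leq x_q^\delta$ by the choice $\delta<(\theta-\tfrac12)/2$. The passage to $G_\eta$ via Lemma~\ref{psi-theta}(2) is also exactly as intended.
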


\section{ The result under $GRH_\eta$}\label{GRH}

The aim of this section is to prove Proposition~\ref{GRH-True} below. We state it in a way that includes both Theorems~\ref{princ} and~\ref{moyenne}. 
We first start by using Weil explicit formula to obtain an estimation of the sums $\sum_{\rho_\chi}\M(\rho_\chi)$ over zeros of Dirichlet $L$-function. Combining this estimation with Lemma~\ref{Riemann-Hyp} will enable us to reduce the study of these sums to sums over zeros inside a fixed bounded set that will be well chosen, which will be crucial to prove our main results.

\begin{prop}\label{Weil}
Let $\eta \in \mathcal{S}$ and let $\chi$ be a non-principal Dirichlet character modulo $q\geq3$. Then, $$\sum_{\rho_\chi}\M(\rho_\chi)=\underset{T\to +\infty}{\lim}\underset{|\mathfrak{I}(\rho_\chi)|\leq T}{\sum}\M(\rho_\chi)=\eta(1)\log q_\chi +O_\eta(1) \, .$$
    
\end{prop}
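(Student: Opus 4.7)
The plan is to compute $\sum_{\rho_\chi}\M(\rho_\chi)$ by a contour shift starting from the smooth Perron formula of Proposition~\ref{InverseMellin}(2) at $x=1$, and using the logarithmic derivative of the functional equation of $L(s,\chi)$ to extract the main term $\eta(1)\log q_\chi$. Absolute convergence of the sum is immediate from Lemma~\ref{Maj}(2) combined with the zero count $\#\{\rho_\chi:|\Im\rho_\chi|\leq T\}\ll T\log(qT)$, which implies $\sum_{\rho_\chi}|\M(\rho_\chi)|\ll_\eta\log q$; hence the first equality in the statement is automatic and only the asymptotic requires work. Since $L(s,\chi)$ and $L(s,\chi^*)$ share the same non-trivial zeros (the missing Euler factors contribute only zeros on $\Re(s)=0$), I may further assume $\chi$ is primitive of conductor $q$.

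From Proposition~\ref{InverseMellin}(2) applied at $x=1$ one has
\begin{equation*}
\psi_\eta(1,\chi)=-\frac{1}{2\pi i}\int_{(2)}\frac{L'}{L}(s,\chi)\M(s)\,ds,
\end{equation*}
and I would shift the line to $\Re(s)=-\varepsilon$ for a fixed small $\varepsilon\in(0,1/2)$, a strip in which $\M$ remains holomorphic. Inside the resulting rectangle the integrand has simple poles at every non-trivial zero $\rho_\chi$ (residue $\M(\rho_\chi)$) and, when $\chi$ is even, at $s=0$ (residue $\M(0)=O_\eta(1)$). The horizontal segments at height $T_1\in[T,T+1]$ vanish as $T\to\infty$ by combining Lemma~\ref{Maj}(2) (giving $|\M(s)|\ll 1/T^2$) with the bound $\frac{L'}{L}(s,\chi)\ll(\log qT)^2$ from \cite{MV}*{Lemma~12.7}, exactly as done in the proof of Proposition~\ref{explform}. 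The residue theorem therefore yields
\begin{equation*}
\sum_{\rho_\chi}\M(\rho_\chi)=-\psi_\eta(1,\chi)-\frac{1}{2\pi i}\int_{(-\varepsilon)}\frac{L'}{L}(s,\chi)\M(s)\,ds+O_\eta(1),
\end{equation*}
and $\psi_\eta(1,\chi)=\sum_n\Lambda(n)\chi(n)\eta(n)$ is itself $O_\eta(1)$ by the rapid decay of $\eta$ at infinity, since $\Lambda(n)\eta(n)\ll_\eta\Lambda(n)/n^2$ is summable.

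The heart of the proof is the evaluation of the remaining integral by means of the log-derivative of the functional equation of the completed $L$-function:
\begin{equation*}
\frac{L'}{L}(s,\chi)=-\log\frac{q}{\pi}-\frac{1}{2}\frac{\Gamma'}{\Gamma}\!\left(\tfrac{s+\kappa}{2}\right)-\frac{1}{2}\frac{\Gamma'}{\Gamma}\!\left(\tfrac{1-s+\kappa}{2}\right)-\frac{L'}{L}(1-s,\overline{\chi}).
\end{equation*}
Integrated against $\M(s)$ on $\Re(s)=-\varepsilon$, the constant term yields $-\eta(1)\log q+O_\eta(1)$ upon invoking the inversion $\eta(1)=\frac{1}{2\pi i}\int_{(-\varepsilon)}\M(s)\,ds$ from Corollary~\ref{InverseMellinFormula}. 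The two Gamma integrals are $q$-independent and converge absolutely by Lemma~\ref{Maj}(2) together with the standard bound $\frac{\Gamma'}{\Gamma}(z)\ll\log(|z|+2)$, so each contributes $O_\eta(1)$. For the last term I would substitute $w=1-s$, landing on the line $\Re(w)=1+\varepsilon$ where the Dirichlet expansion $-\frac{L'}{L}(w,\overline{\chi})=\sum_n\Lambda(n)\overline{\chi}(n)n^{-w}$ converges absolutely; exchanging sum and integral and applying Mellin inversion a second time identifies this integral with $\sum_n\Lambda(n)\overline{\chi}(n)\eta(1/n)/n$, which is $O_\eta(1)$ thanks to $\eta(1/n)\ll 1/n$ from the near-zero behavior in~\eqref{support}. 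Assembling the four contributions produces the claimed asymptotic.

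The most delicate point will be the joint justification of the $T\to\infty$ limit for the two contour shifts involved (the original one and the one hidden in the $s\mapsto 1-s$ substitution, which implicitly corresponds to shifting a line in an integral for $\overline{\chi}$). Both reduce to choosing $T$ along the standard averaged sequence avoiding ordinates of zeros of $L(\cdot,\chi)$ and $L(\cdot,\overline{\chi})$, and all error terms are absolutely controlled by Lemma~\ref{Maj}(2); but the bookkeeping must be carried out with care to ensure that the $O_\eta(1)$ constants are genuinely uniform in $q$.
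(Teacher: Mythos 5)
Your proof is correct but takes a genuinely different route from the paper's. The paper simply invokes Weil's explicit formula in the packaged form of \cite{MV}*{Theorem 12.13}: it sets $F_\eta(x):=e^{\pi x}\eta(e^{2\pi x})$, verifies that this test function satisfies the hypotheses of that theorem using \eqref{support}, computes the associated transform $\Phi(s)=\frac{1}{2\pi}\M(1-s)$, pairs zeros of $L(s,\chi)$ with zeros of $L(s,\overline{\chi})$ via the functional equation, and reads off the term $\eta(1)\log q_\chi$ directly from the statement of the cited theorem, absorbing everything else into $O_\eta(1)$. You instead re-derive the required instance of the explicit formula from scratch: you start from the smooth Perron formula (Proposition~\ref{InverseMellin}(2)) at $x=1$, shift the line of integration from $\Re(s)=2$ to $\Re(s)=-\varepsilon$ picking up $\sum_{\rho_\chi}\M(\rho_\chi)$ and $\M(0)$ as residues, and then evaluate the integral on the new line via the logarithmic derivative of the functional equation; the constant piece $-\log(q/\pi)$ produces $\eta(1)\log q$ by Mellin inversion, the digamma pieces are $q$-independent and absolutely convergent, and the reflected piece $\frac{L'}{L}(1-s,\overline{\chi})$ becomes $\sum_n\Lambda(n)\overline{\chi}(n)\eta(1/n)/n=O_\eta(1)$ after a second Mellin inversion, using $\eta(t)\ll t$ near $0$. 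I have checked the signs and the reductions (to primitive $\chi$, and the conductor $q_\chi$ vs.\ $q$) and they are consistent. What your version buys is a fully self-contained argument relying only on tools already introduced earlier in the paper (Lemma~\ref{Maj}, Corollary~\ref{InverseMellinFormula}, Proposition~\ref{InverseMellin}, the estimates from \cite{MV}*{Lemmas 12.6--12.7}); what the paper's version buys is brevity, at the cost of importing a more substantial theorem. The only point you should tighten is the choice of the horizontal heights $T_1$: since you shift to $\Re(s)=-\varepsilon$, the range $-1/2\le\Re(s)\le 2$ in Lemma~\ref{Maj}(2) forces $\varepsilon<1/2$, and the averaged choice of $T_1\in[T,T+1]$ from \cite{MV}*{Lemma 12.7} suffices exactly as in the proof of Proposition~\ref{explform}; the ``second contour shift hidden in $s\mapsto 1-s$'' that you worry about is not actually a second shift but a mere change of variables on a fixed vertical line, so no additional care about ordinates of zeros is needed there.
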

\begin{proof}
    
    This is a consequence of Weil's explicit formula (see \cite{MV}*{Theorem 12.13}). Denote $$F_\eta(x):=e^{\pi x}\eta\left(e^{2\pi x}\right) \, .$$ Using \eqref{support} we see that the function $F_\eta$ satisfies the hypotheses of \cite{MV}*{Theorem 12.13}. Moreover, denoting as in \cite{MV}*{Theorem 12.13} $$ \Phi(s):=\int_{-\infty}^{+\infty} F_\eta(x) e^{-2\pi x(s-\frac{1}{2})}dx=\frac{1}{2\pi}\M(1-s)\, ,$$
    and using the functional equation to see that when $\rho_\chi$ runs over zeros of $L(s,\chi)$ we have $1-\rho_\chi$ runs over zeros of $L(s,\overline{\chi})$. Finally, applying \cite{MV}*{Theorem 12.13}, we obtain the result (where we used \eqref{support} to bound the terms other than $\eta(1)\log q_\chi$ and used the fact $q_\chi=q_{\overline{\chi}}$).
\end{proof}
The following lemma is a direct consequence of \cite{FiMa}*{Lemma 3.2} where the authors state a more precise result:
\begin{lem}\label{subset}
    There exists $B>0$ such that for all $q>B$ there exists a subset $\mathcal{F}_q \subset \mathcal{X}_q$ of the set of all characters modulo $q$ with cardinality $\left |\mathcal{F}_q \right| \geq \frac{\phi(q)}{2}$, such that for all $\chi \in \mathcal{F}_q$ we have
    \begin{equation}
       \log q_\chi > \log q- (\log \log q)^2 > \frac{3}{4} \log q  \, .
    \end{equation}
\end{lem}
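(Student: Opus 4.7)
My plan is to count the characters modulo $q$ with "small" conductor via a Markov-type argument exploiting the prime-power decomposition of the character group, then take $\mathcal{F}_q$ to be the complement.

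Writing $q = \prod_p p^{v_p(q)}$, every $\chi \in \mathcal{X}_q$ factors uniquely as $\chi = \prod_{p\mid q} \chi_p$ with $\chi_p$ a character modulo $p^{v_p(q)}$, and its conductor satisfies $q_\chi = \prod_p p^{v_p(q_\chi)}$ where $v_p(q_\chi) \leq v_p(q)$ depends only on $\chi_p$. Hence
\[
\log(q/q_\chi) = \sum_{p\mid q} \bigl(v_p(q) - v_p(q_\chi)\bigr) \log p,
\]
and when $\chi$ ranges uniformly over $\mathcal{X}_q$ the summands are independent nonnegative random variables. A direct count of the characters modulo $p^{v_p(q)}$ that factor through $(\Z/p^{v_p(q)-k}\Z)^*$ gives, for $p$ odd, $\Pr[v_p(q) - v_p(q_\chi) \geq k] = p^{-k}$ for $1 \leq k \leq v_p(q)-1$ and $\leq 1/\phi(p^{v_p(q)})$ for $k=v_p(q)$. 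Summing a geometric series yields $\mathbf{E}[v_p(q) - v_p(q_\chi)] \ll 1/(p-1)$; the prime $p=2$ contributes only an additional $O(1)$.

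Consequently $\mathbf{E}[\log(q/q_\chi)] \ll \sum_{p\mid q} (\log p)/(p-1)$. Splitting the primes dividing $q$ at $Y := (\log q)^2$ and combining Mertens' theorem $\sum_{p \leq Y}(\log p)/(p-1) \ll \log Y$ with the crude bound $\#\{p\mid q : p > Y\} \leq (\log q)/\log Y$ gives the Mertens-type estimate $\sum_{p\mid q}(\log p)/(p-1) \ll \log\log q$ (and this is essentially tight when $q$ is a primorial). Markov's inequality then yields
\[
\#\{\chi \in \mathcal{X}_q : \log(q/q_\chi) > (\log\log q)^2\} \ll \frac{\phi(q)}{\log \log q},
\]
which is at most $\phi(q)/2$ once $q$ is larger than some threshold $B$. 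Taking $\mathcal{F}_q$ to be the complementary set gives $|\mathcal{F}_q| \geq \phi(q)/2$ together with $\log q_\chi > \log q - (\log\log q)^2$ for every $\chi \in \mathcal{F}_q$. The remaining inequality $\log q - (\log\log q)^2 > \tfrac{3}{4}\log q$ is equivalent to $\log q > 4(\log\log q)^2$, and holds after possibly enlarging $B$.

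The main obstacle is proving the Mertens-type bound $\sum_{p\mid q}(\log p)/(p-1) \ll \log\log q$ with the correct uniformity: the trivial bound $\sum_{p\mid q}\log p \leq \log q$ is far too weak, and one genuinely needs the structural fact that the worst case occurs at primorials, where the number of prime divisors is $O(\log q/\log\log q)$ and the sum can be controlled by Mertens' theorem on $\sum_{p\leq Y}(\log p)/p$.
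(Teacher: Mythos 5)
Your proof is correct; note that the paper does not supply its own argument here but cites the lemma as a direct consequence of \cite{FiMa}*{Lemma~3.2}, whose proof rests on the same first-moment computation — the average of $\log(q/q_\chi)$ over $\chi\in\mathcal{X}_q$ is $\ll\log\log q$ — followed by Markov's inequality. Your route (decomposing $\mathcal{X}_q$ as a product of the character groups modulo the prime-power factors of $q$, using independence to reduce the mean to $\sum_{p\mid q}(\log p)/(p-1)$, then splitting at $Y=(\log q)^2$ to obtain the $\log\log q$ bound) is a clean self-contained version of this mechanism; the only micro-adjustment needed is to take the Markov threshold to be, say, $\tfrac12(\log\log q)^2$ so as to secure the strict inequality $\log q_\chi>\log q-(\log\log q)^2$ required in the statement, rather than the non-strict one that passing to the complement of $\{\chi:\log(q/q_\chi)>(\log\log q)^2\}$ delivers.
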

Let us recall the Riemann - von Mangoldt formula, which states that for all $T\geq2$:  \begin{equation}\label{RVM}
    N(T,\chi):=|\{\rho_\chi\ :\ |\mathfrak{I}(\rho_\chi)|\leq T\}|=\frac{T}{\pi}\log\left(\frac{q_\chi T}{2\pi e}\right)+O(\log(qT))\, .
\end{equation}
For $\mathcal{G}_q\subset \mathcal{X}_q$ a set of characters $\chi$ with conductor $q_\chi$ satisfying $\log q_\chi>\log q - (\log \log q)^2$, we denote $$N(T,\mathcal{G}_q):=\sum_{\chi \in \mathcal{G}_q}N(T,\chi) \, .$$
Denoting $\Phi_q=|\mathcal{G}_q|$ and $E(\mathcal{G}_q):=\sum_{\chi \in \mathcal{G}_q}\log q-\log q_\chi$, we see that $E(\mathcal{G}_q)<\Phi_q (\log \log q)^2$. Thus; with same computation as in \cite{FiMa}*{page 4804}, we obtain
\begin{equation}\label{Riem-VM-sets}
    N(T,\mathcal{G}_q)=\frac{\Phi_q}{\pi}T\log(qT)+O\left(E(\mathcal{G}_q)T+\Phi_q(T+\log  qT)\right) \,  .
\end{equation}
An easy consequence of the Riemann - von Mangoldt formula is the following.
\begin{lem}\label{Riem-VM}
    There exists an absolute constant $C>0$ such that for all $T\geq 2$, for all $q\geq 3$ and for all Dirichlet characters $\chi$ modulo $q$ \begin{align*}
     \sum_{|\mathfrak{I}(\rho_\chi)| > T} \frac{1}{|\rho_\chi|^2} &\leq C \frac{\log(qT)}{T}\, , \\
     \sum_{\rho_\chi}\frac{1}{|\rho_\chi|^2}&\leq C  \log q\, .
 \end{align*}
\end{lem}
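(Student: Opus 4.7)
My plan is to derive both inequalities by a standard dyadic decomposition / partial summation from the Riemann--von Mangoldt formula \eqref{RVM}, which gives $N(T,\chi) \ll T\log(qT)$ for all $T\geq 2$.

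For the first inequality, since $|\rho_\chi|^2 \geq \Im(\rho_\chi)^2$ whenever $|\Im(\rho_\chi)| > T$, it suffices to estimate $\sum_{|\Im(\rho_\chi)|>T} \Im(\rho_\chi)^{-2}$. I would partition the set of such zeros into dyadic blocks $B_k = \{\rho_\chi : 2^k T < |\Im(\rho_\chi)| \leq 2^{k+1} T\}$ for $k \geq 0$. By \eqref{RVM} one has $|B_k| \leq N(2^{k+1}T,\chi) \ll 2^k T \log(q \cdot 2^k T)$, while every $\rho_\chi \in B_k$ satisfies $|\rho_\chi|^{-2} \leq (2^k T)^{-2}$. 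Summing the resulting geometric-type series,
\begin{equation*}
\sum_{|\Im(\rho_\chi)|>T}\frac{1}{|\rho_\chi|^2} \ll \sum_{k\geq 0}\frac{\log(q\cdot 2^k T)}{2^k T} \ll \frac{\log(qT)}{T},
\end{equation*}
where the final step uses the convergence of $\sum_{k\geq 0} k/2^k$ to absorb the factor $\log 2^k$.

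For the second inequality I would split the full sum at $|\Im(\rho_\chi)| = 2$. Applying the first inequality with $T = 2$ bounds the tail $\sum_{|\Im(\rho_\chi)|>2} |\rho_\chi|^{-2}$ by $O(\log q)$. The remaining contribution from zeros with $|\Im(\rho_\chi)| \leq 2$ involves only $N(2,\chi) \ll \log q$ terms by \eqref{RVM}, and each such term is $O(1)$ once one knows that the non-trivial zeros of $L(s,\chi)$ are bounded away from $s=0$ uniformly in $q$. This lower bound on $|\rho_\chi|$ for low-lying zeros is standard: it follows, for instance, by evaluating the partial-fraction expansion of $L'/L(s,\chi)$ at $s=2$ to obtain $\sum_{\rho_\chi}|2-\rho_\chi|^{-2} \ll \log q$, combined with the functional equation (which transports zeros close to $s=0$ for $L(s,\chi)$ to zeros close to $s=1$ for $L(s,\bar\chi)$, where they are controlled by the classical zero-free region).

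The expected main obstacle is precisely this last point: \eqref{RVM} alone only counts zeros by their imaginary parts, so it cannot by itself rule out accumulation of non-trivial zeros very near $s=0$, which would spoil the clean $\log q$ bound. The dyadic summation is straightforward, but the treatment of the small-$|\Im(\rho_\chi)|$ block requires invoking this additional zero-location input.
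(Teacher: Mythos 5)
Your dyadic argument for the first inequality is correct and is presumably what the paper intends when it calls the lemma an easy consequence of \eqref{RVM}. For the second inequality you have correctly identified the genuine difficulty, namely that \eqref{RVM} controls zeros only through their imaginary parts and hence says nothing about how close the $\ll\log q$ low-lying zeros can come to $s=0$. But the repair you propose does not work. The classical zero-free region near $\mathfrak{R}(s)=1$ is \emph{not} unconditional along the real axis: it explicitly excepts a possible real Siegel zero of a real primitive character. If $\chi$ is real primitive and $\beta$ is such an exceptional zero of $L(s,\chi)$, then the functional equation (with $\overline{\chi}=\chi$) makes $1-\beta$ a non-trivial zero of $L(s,\chi)$, and no absolute lower bound on $1-\beta$ is known; even $1-\beta\gg 1/\log q$ is open, the best effective bounds being roughly of the shape $1-\beta\gg q^{-1/2}$. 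In such a scenario the single term $|1-\beta|^{-2}$ already exceeds $(\log q)^2$. The estimate $\sum_{\rho_\chi}|2-\rho_\chi|^{-2}\ll\log q$ that you cite is correct but gives no information on $|\rho_\chi|$ itself, since $|2-\rho_\chi|\asymp 1$ for every $\rho_\chi$ in the critical strip. The uniform lower bound on $|\rho_\chi|$ you assert is therefore essentially equivalent to the non-existence of Siegel zeros and is not available.

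Consequently the second inequality, as literally stated with an absolute constant and no hypothesis, is not established by your argument, and the paper's one-line attribution to \eqref{RVM} glosses over the same point. It is nonetheless harmless for the paper's purposes: the lemma is only ever invoked to bound $\sum_{\rho_\chi}|\M(\rho_\chi)|$, and there the difficulty evaporates because $\M$ is holomorphic on $\{\mathfrak{R}(s)>-1\}$, hence bounded on a neighbourhood of $s=0$. In particular the bound $|\M(s)|\le B_\eta |s|^{-2}$ of Lemma~\ref{Maj}(2) is far from sharp near $0$, and one in fact has $|\M(\rho_\chi)|\ll_\eta\min\bigl(1,|\rho_\chi|^{-2}\bigr)$. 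The sum $\sum_{\rho_\chi}\min\bigl(1,|\rho_\chi|^{-2}\bigr)\ll\log q$ then does follow from your dyadic computation combined with $N(2,\chi)\ll\log q$, with no need for any lower bound on $|\rho_\chi|$ for the low-lying zeros.
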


Let $C>0$ be as in Lemma~\ref{Riem-VM} and denote $H_\eta=C\cdot B_\eta$ (where $B_\eta$ was defined in Lemma~\ref{Maj}(2)). Applying Lemmas~\ref{Maj} and \ref{Riem-VM} we see that for all $q$ large enough and all $\chi  \in \mathcal{G}_q$ \begin{equation}\label{Cor:Riem-VM}\underset{\rho_\chi}{{\sum}} \left |\M(\rho_\chi) \right|\leq B_\eta \sum_{\rho_\chi} \frac{1}{|\rho_\chi|^2} \leq H_\eta \log q\, .\end{equation}
We denote \begin{equation}\label{C_eta}
     C_\eta :=\log \left ( \left[16\pi^2 \frac{H_\eta}{\eta(1)}\right]+1 \right)\, ,
 \end{equation}
 which will be useful in the statement of Proposition~\ref{GRH-True}.

The Diophantine approximation result we will use to prove our main results is an easy consequence of \cite{FiMa}*{Lemma 3.7}; we state it here for completeness.
\begin{lem}\label{Dio}
    Let $\Lambda$ be a set of real numbers of cardinality $k \geq 2$ and let $M\geq 2$ be an integer. Then, for all $N \geq M^{3k}$, there exists an integer $n$ such that $N^{\frac{1}{3}}<n\leq N$ and for all $\lambda \in \Lambda$, we have $$\| n\lambda \| \leq \frac{1}{M}\, ,$$ where $\|x\|$ denotes the distance of $x\in \R$ from the nearest integer.
    \end{lem}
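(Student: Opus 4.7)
The plan is to argue via a block-pigeonhole argument in the spirit of the classical simultaneous Dirichlet theorem (which is essentially the content of \cite{FiMa}*{Lemma 3.7}), but adapted so as to force the solution $n$ into the range $(N^{1/3},N]$ rather than merely into $[1,M^k]$.

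More concretely, I would partition the integers in $[1,N]$ into $R=\lfloor N^{2/3}\rfloor$ consecutive blocks of length $\lceil N^{1/3}\rceil$, choose a representative integer $m_j$ from each block $B_j$, and consider the associated points $\varphi(m_j):=(\{m_j\lambda\})_{\lambda\in\Lambda}\in[0,1)^k$. Next, I would subdivide the torus $[0,1)^k$ into $M^k$ half-open sub-cubes of side $1/M$. The hypothesis $N\geq M^{3k}$ together with $M,k\geq 2$ yields $R\geq M^{2k}\geq 2M^k+1$, so the pigeonhole principle produces a sub-cube containing at least three of the representatives, say $\varphi(m_{j_1}),\varphi(m_{j_2}),\varphi(m_{j_3})$ with $j_1<j_2<j_3$. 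Because $j_3-j_1\geq 2$, the blocks $B_{j_1}$ and $B_{j_3}$ are separated by at least one full block of length $\lceil N^{1/3}\rceil$, so $n:=m_{j_3}-m_{j_1}$ already satisfies $N^{1/3}<n\leq N$. Since $\varphi(m_{j_1})$ and $\varphi(m_{j_3})$ lie in the same cube of side $1/M$, the coordinate-wise difference $\{m_{j_3}\lambda\}-\{m_{j_1}\lambda\}$ belongs to $(-1/M,1/M)$ for each $\lambda\in\Lambda$, and recognising this quantity as the signed distance of $n\lambda$ to a nearby integer immediately gives $\|n\lambda\|\leq 1/M$.

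The only delicate point, and the only place where the exponent $\tfrac{1}{3}$ and the hypothesis $N\geq M^{3k}$ both enter, is the strengthening of pigeonhole from ``two items in a bin'' to ``three items in a bin'', which is what ultimately forces $j_3-j_1\geq 2$ and hence the lower bound $n>N^{1/3}$; a naive two-in-a-bin argument would permit $j_1,j_3$ to be consecutive blocks and only yield $n\geq 1$. Everything else reduces to routine bookkeeping, together with the standard observation that two fractional parts sharing a $1/M$-cube yield a $1/M$-approximation of their difference without any subadditivity loss.
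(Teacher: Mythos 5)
Your argument is correct, but it takes a different (and more self-contained) route than the paper. The paper simply invokes \cite{FiMa}*{Lemma 3.7} as a \emph{counting} statement, namely that at least $\tfrac{N}{M^k}-1$ integers $n\leq N$ satisfy the simultaneous approximation, and then observes that for $N\geq M^{3k}$ this count exceeds $N^{1/3}$, so at least one good $n$ must lie in $(N^{1/3},N]$; the lower bound on $n$ is obtained after the fact by exclusion. You instead re-derive the pigeonhole from scratch, partitioning $[1,N]$ into roughly $N^{2/3}$ blocks of length roughly $N^{1/3}$ and requiring \emph{three} block-representatives to collide in a single $1/M$-cube so that two of them come from non-adjacent blocks; this builds the lower bound $n>N^{1/3}$ directly into the construction. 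Both arguments are ultimately the Dirichlet box principle and are of comparable length; the paper's is shorter because the citation is available, whereas yours avoids relying on the uncited counting inequality and makes transparent where the exponent $\tfrac{1}{3}$ comes from. One small bookkeeping point to tighten in your version: with $L=\lceil N^{1/3}\rceil$ and $R=\lfloor N^{2/3}\rfloor$ one can have $RL>N$, so the blocks should be $B_j=((j-1)L,\,jL]$ for $1\leq j\leq \lfloor N/L\rfloor$; the inequality $\lfloor N/L\rfloor \geq N^{2/3}-N^{1/3}-1\geq M^{2k}-M^k-1>2M^k$ (valid since $M^k\geq 4$) still holds, so the three-in-a-bin pigeonhole survives.
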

\begin{proof} Using \cite{FiMa}*{Lemma 3.7} we have $$\# \left\{ n\leq N\ :\ \forall\lambda\in \Lambda\ \|n\lambda\| \leq \frac{1}{M} \right\} \geq \frac{N}{M^k}-1 \, .$$
Thus, for $N \geq M^{3k}$, we have $$\# \left\{ n\leq N\ :\ \forall\lambda\in \Lambda\ \|n\lambda\| \leq \frac{1}{M} \right\} >N^{\frac{1}{3}} \, ,$$which implies the lemma.   
\end{proof}

The following lemma is the main technical tool of this paper.

\begin{lem}\label{MainLemma}
    Let $\eta \in \mathcal{S}$. For all $Q\geq3$ let $\mathcal{D}_Q $ be a non-empty subset of $ [Q,2Q]\cap \N$, let $\mathcal{G}_Q$ be a subset of characters $\chi \in \mathcal{X}_Q$ with conductor $q_\chi$ satisfying $\log q_\chi>\log Q- (\log \log Q)^2$, and assume that its cardinal $\Phi_Q \gg \frac{\phi(Q)}{\log Q}$, and let $f:\N \to \R_{>C_\eta}$, where $C_\eta$ is defined in \eqref{C_eta}. If $GRH_\eta$ holds, then for all $Q$ large enough in terms of $\eta$ there exist $x_Q$ such that \begin{align*}
        \log \log x_Q \asymp_\eta& \left( \sum_{q\in \mathcal{D}_Q}\Phi_q \right)f(Q)\log Q \, ,\\
        \left|\sum_{q\in \mathcal{D}_Q}\sum_{\chi \in \mathcal{G}_q}\psi_\eta(x_Q,\chi) \right|& \gg_\eta \left(\sum_{q\in \mathcal{D}_Q}\Phi_q\right) \sqrt{x_Q}\log Q \, .
    \end{align*}
\end{lem}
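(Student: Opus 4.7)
The plan is to apply the explicit formula (Proposition~\ref{explform}) to each $\psi_\eta(x_Q,\chi)$ and then to choose $x_Q$ via the Diophantine synchronization of Lemma~\ref{Dio}, so that the critical-line zeros of the relevant Dirichlet $L$-functions contribute with aligned phases and produce a main term of size $\sqrt{x_Q}\bigl(\sum_{q}\Phi_q\bigr)\log Q$.

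First I would fix a sufficiently large constant $D=D_\eta>0$. Under $GRH_\eta$ we have $\M(\rho_\chi)=0$ whenever $\mathfrak{R}(\rho_\chi)>1/2$, so I would split the remaining zeros in the explicit formula into three pieces: critical-line zeros $(A)$ with $|\mathfrak{I}(\rho_\chi)|\leq D$; off-line zeros $(B)$ with $\mathfrak{R}(\rho_\chi)<1/2$ and $|\mathfrak{I}(\rho_\chi)|\leq D$; and a tail with $|\mathfrak{I}(\rho_\chi)|>D$. The tail contributes at most $H_\eta\sqrt{x_Q}\bigl(\sum_{q}\Phi_q\bigr)\log(QD)/D$ by Lemma~\ref{Riem-VM}, a small fraction of the target main term $\eta(1)\sqrt{x_Q}\bigl(\sum_{q}\Phi_q\bigr)\log Q$ once $D$ is large enough in terms of $\eta$. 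The $(B)$ piece is bounded by Lemma~\ref{Riemann-Hyp} as $x_Q^{\beta_\eta}|\mathcal{D}_Q|Q^{3\beta_\eta/(1+\beta_\eta)}(\log Q)^{9}$, which is negligible since $\beta_\eta<1/2$ and therefore $3\beta_\eta/(1+\beta_\eta)<1$.

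Next let $\Lambda$ be the set of all distinct real numbers $\gamma/(2\pi)$ with $\rho=1/2+i\gamma$ an $(A)$ zero of $L(\cdot,\chi)$ for some $\chi\in\mathcal{G}_q$, $q\in\mathcal{D}_Q$; by~\eqref{Riem-VM-sets}, $|\Lambda|\asymp_\eta\bigl(\sum_{q}\Phi_q\bigr)\log Q$. Set $M=\lceil e^{f(Q)}\rceil$ and $N=M^{3|\Lambda|}$, and apply Lemma~\ref{Dio} to produce $n\in(N^{1/3},N]$ with $\|n\lambda\|\leq 1/M$ for every $\lambda\in\Lambda$. Defining $\log x_Q:=n$ makes the phase $x_Q^{i\gamma}=e^{in\gamma}$ lie within $O(1/M)$ of $1$ for each $(A)$ zero. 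The first asymptotic $\log\log x_Q\asymp_\eta\bigl(\sum_{q}\Phi_q\bigr)f(Q)\log Q$ then follows from $\log n\in[\log N/3,\log N]$ combined with $\log N=3|\Lambda|\log M$. The hypothesis $f(Q)>C_\eta$ guarantees $M>16\pi^{2}H_\eta/\eta(1)$, so the synchronization error, bounded via~\eqref{Cor:Riem-VM} by $2\pi H_\eta\sqrt{x_Q}\bigl(\sum_{q}\Phi_q\bigr)\log Q/M$, is a small fraction of the target main term.

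For the second assertion I would invoke Proposition~\ref{Weil}: $\sum_{\rho_\chi}\M(\rho_\chi)=\eta(1)\log q_\chi+O_\eta(1)$. Subtracting the small $(B)$ and tail pieces yields $\sum_{\rho\in(A)}\M(\rho_\chi)=\eta(1)\log q_\chi+(\text{small})$, and after synchronization the $(A)$ contribution to $\sum_{q}\sum_{\chi\in\mathcal{G}_q}\psi_\eta(x_Q,\chi)$ becomes $-\sqrt{x_Q}\,\eta(1)\sum_{q}\sum_{\chi\in\mathcal{G}_q}\log q_\chi$ up to small errors. Using the conductor bound $\log q_\chi>\tfrac{3}{4}\log Q$ for $\chi\in\mathcal{G}_q$ gives the claimed lower bound $\gg_\eta\sqrt{x_Q}\bigl(\sum_{q}\Phi_q\bigr)\log Q$ in absolute value. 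The main obstacle is the simultaneous balancing of the three parameters --- the constant $D$ (chosen so the tail is controlled), the approximation quality $M=e^{f(Q)}$ (chosen so the synchronization error is controlled), and the Diophantine degree $N=M^{3|\Lambda|}$ (which dictates $\log\log x_Q$) --- so that all error terms stay below the main term while $\log\log x_Q$ lands in the prescribed window $\asymp\bigl(\sum_{q}\Phi_q\bigr)f(Q)\log Q$; the lower bound $f(Q)>C_\eta$ is engineered precisely to make this possible.
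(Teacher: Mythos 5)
Your plan matches the paper's proof closely: the same decomposition of the explicit formula into critical-line zeros with small ordinate, off-line zeros inside the rectangle $\Gamma$ (controlled by Lemma~\ref{Riemann-Hyp}), and the tail (controlled via Lemma~\ref{Riem-VM}); the same use of Proposition~\ref{Weil} together with the conductor bound $\log q_\chi>\tfrac34\log Q$ to extract the main term; and the same Diophantine synchronization via Lemma~\ref{Dio}.

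There is, however, a genuine gap in how you set up the Diophantine step. You define $\Lambda$ as the set of \emph{distinct} ordinates $\gamma/(2\pi)$ and choose $N=M^{3|\Lambda|}$, then invoke \eqref{Riem-VM-sets} to assert $|\Lambda|\asymp_\eta\bigl(\sum_{q}\Phi_q\bigr)\log Q$. But the Riemann--von Mangoldt count $N(T,\mathcal{G}_q)$ counts zeros \emph{with multiplicity}, so it only gives the upper bound $|\Lambda|\leq\sum_q N(D_\eta,\mathcal{G}_q)\asymp(\sum_q\Phi_q)\log Q$; the matching lower bound on the number of distinct ordinates is not known (multiplicities of zeros, and coincidences of ordinates across characters, are not ruled out unconditionally). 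With your choice $N=M^{3|\Lambda|}$, a hypothetical collapse in $|\Lambda|$ would make $\log\log x_Q\asymp |\Lambda|\log M$ too small to satisfy the claimed asymptotic $\log\log x_Q\asymp_\eta(\sum_q\Phi_q)f(Q)\log Q$. The paper sidesteps this by setting $N=\exp\bigl(3f(Q)\sum_q N(D_\eta,\mathcal{G}_q)\bigr)$ directly: since $\sum_q N(D_\eta,\mathcal{G}_q)\geq|\Lambda|$ and $f(Q)>C_\eta=\log M$, the hypothesis $N\geq M^{3|\Lambda|}$ of Lemma~\ref{Dio} is automatically satisfied, and $\log\log x_Q\asymp\log N$ has the right size regardless of collisions. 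Replacing $|\Lambda|$ by the multiplicity count $\sum_q N(D_\eta,\mathcal{G}_q)$ in your definition of $N$ repairs the argument. (Your choice $M=\lceil e^{f(Q)}\rceil$ instead of the paper's fixed $M=e^{C_\eta}$ is merely cosmetic and works equally well.)
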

\begin{proof}
By \eqref{Riem-VM-sets} we have uniformly for $Q\geq3$ $$\sum_{q\in \mathcal{D}_Q}N(T,\mathcal{G}_q)=\sum_{q\in \mathcal{D}_Q}\frac{\Phi_q}{\pi}T\log(qT)+O\left(\sum_{q\in \mathcal{D}_Q}(E(\mathcal{G}_q)T+\Phi_q(T+\log  qT))\right) \,  .$$
By assumption on $q_\chi$ for $\chi \in \mathcal{G}_q$, there exists an absolute constant $R>0$ large enough so that uniformly for $T,Q \geq R$ we have
\begin{equation}\label{ Riemm-VM2} \sum_{q\in \mathcal{D}_Q}N(T,\mathcal{G}_q)\asymp \sum_{q\in \mathcal{D}_Q}\Phi_qT\log(QT)\, .\end{equation}
Lemmas~\ref{Maj}(2) and \ref{Riem-VM} allow us to consider $D_\eta>R$ large enough so that for all $q$ large enough and all $\chi  \in \mathcal{G}_q$ we have
\begin{equation}\label{Restmaj}
    \sum_{|\mathfrak{I}(\rho_\chi)|> D_\eta}\left|\M(\rho_\chi)\right|< \frac{\eta(1)}{16} \log q \, .
\end{equation}
Let us consider the set \begin{equation}\label{Rectangle}
    \Gamma:=\left \{s\in \C\ :\ 0\leq \mathfrak{R}(s) \leq 1 \text{ with } \ \mathfrak{R}(s) \ne \frac{1}{2} \text{ and }-D_\eta \leq \mathfrak{I}(s) \leq D_\eta \right \}
\end{equation} 
By Proposition~\ref{Weil} we have for all $q$ large enough and all $\chi  \in \mathcal{G}_q$ $$\sum_{\rho_\chi}\M\left(\rho_\chi\right)=\eta(1)\log q_\chi +O_\eta(1)\,  .$$
Using Lemma~\ref{Riemann-Hyp} we see that there exists $\alpha=\alpha(\eta)<1$ such that uniformly for $q\geq3$ we have $$\sum_{ \chi \in \mathcal{G}_q} \sum_{\rho_\chi \in \Gamma} | \M (\rho_\chi) | \ll_\eta q^\alpha .$$
Hence $$\sum_{\chi \in \mathcal{G}_q}\underset{\substack{|\mathfrak{I}(\rho_\chi)|\leq D_\eta,\\ \rho_\chi \notin \Gamma}}{ {\sum}}\M(\rho_\chi)=\eta(1)\sum_{\chi \in \mathcal{G}_q}\log q_\chi-\sum_{|\mathfrak{I}(\rho_\chi)|> D_\eta}\M(\rho_\chi)+O_\eta(\Phi_q)+O_\eta(q^\alpha)\, .$$
Combining the fact $\Phi_q \gg \frac{\phi(q)}{\log q} \gg q^\alpha$ with \eqref{Restmaj}, and using our assumption assumption on $q_\chi$ for $\chi \in \mathcal{G}_q$, we see that for all $Q$ large enough \begin{equation}\label{R_eta}
     \left|\sum_{q\in \mathcal{D}_Q}\sum_{\chi \in \mathcal{G}_q}\underset{\substack{|\mathfrak{I}(\rho_\chi)|\leq D_\eta,\\ \rho_\chi \notin \Gamma}}{{\sum}}\M(\rho_\chi)\right| >\frac{3}{8}\eta(1) \sum_{q\in \mathcal{D}_Q} \Phi_q \log q\, .    
 \end{equation}
For $Q$ large enough in terms of $\eta$, using Lemma~\ref{Dio}, with $M=\exp(C_\eta)$ and $$N:=\exp\left(3f(Q)\sum_{q\in \mathcal{D}_Q}N(D_\eta,\mathcal{G}_q)\right)\, ,$$ we see that there exists $t_Q\in [N^\frac{1}{3},N]$ such that for all $q\in \mathcal{D_Q}$ and $\chi \in \mathcal{G}_q$ and all $\rho_\chi$ with $|\mathfrak{I}(\rho_\chi)|\leq D_\eta$ we have $ \left\| \frac{\mathfrak{I}(\rho_\chi) t_q}{2\pi} \right\| \leq e^{-C_\eta}$.
Thus, by \eqref{Cor:Riem-VM}
\begin{equation*}
    \left |\sum_{q\in \mathcal{D}_Q}\sum_{\chi \in \mathcal{G}_q} \underset{|\mathfrak{I}(\rho_\chi)|\leq D_\eta}{{\sum}}  \M(\rho_\chi)(1-e^{i\mathfrak{I}(\rho_\chi) t_q})\right|\leq \frac{4\pi^2}{e^{C_\eta}} H_\eta \sum_{q\in \mathcal{D}_Q}\Phi_q  \log q   < \frac{\eta(1)}{4} \sum_{q\in \mathcal{D}_Q} \Phi_q \log q  \, ,
\end{equation*}
where we used the fact $$|1-e^{it}| \leq |1-\cos(t)|+|\sin(t)| \leq \frac{1}{2}\left(2\pi  \left \|\frac{t}{2\pi} \right\| \right)^2+2\pi \left\|\frac{t}{2\pi} \right\|\leq 4\pi ^2 \left\|\frac{t}{2\pi} \right\|  \, .$$\\
Using \eqref{R_eta} we obtain \begin{equation}\label{minpr}
        \left|\sum_{q\in \mathcal{D}_Q}\sum_{\chi \in \mathcal{G}_q} \underset{\substack{|\mathfrak{I}(\rho_\chi)|\leq D_\eta, \\ \rho_\chi\notin \Gamma}} {{\sum}}  \M(\rho_\chi)e^{it_q\mathfrak{I}(\rho_\chi)}\right|>\frac{\eta(1)}{8}\sum_{q\in \mathcal{D}_Q}\Phi_q \log q \, .
    \end{equation}
Therefore, writing the explicit formula for each $\chi \in \mathcal{G}_q$ (Proposition \ref{explform}) we see that \begin{equation*}\label{Obj}
     \left|\sum_{q\in \mathcal{D}_Q}\sum_{\chi \in \mathcal{G}_q}\psi_\eta(e^{t_Q},\chi) \right|=\left | \sum_{q\in \mathcal{D}_Q}\sum_{\chi \in \mathcal{G}_q}\underset{\rho_\chi}{\sum}\M(\rho_\chi)e^{t_Q\rho_\chi} +O_\eta\left( \sum_{q\in \mathcal{D}_Q}\sum_{\chi \in \mathcal{G}_q}\log q\right) \right| 
     \end{equation*}
We then split each sum as follows $$ \underset{\rho_\chi}{\sum}\M(\rho_\chi)e^{t_Q\rho_\chi}=\underset{\substack{\frak{I}(\rho_\chi)\leq D_\eta, \\ \rho_\chi \notin \Gamma}}{{\sum}}\M(\rho_\chi)e^{t_Q\rho_\chi}+\sum_{\rho_\chi \in \Gamma}\M (\rho_\chi)e^{t_Q\rho_\chi}+\sum_{\frak{I}(\rho_\chi)>D_\eta}\M(\rho_\chi)e^{t_Q\rho_\chi}$$
We use $GRH_\eta$ to see that for all zeros $\rho_\chi$ we have $|\M (\rho_\chi)e^{t_Q\rho_\chi} |\leq |\M (\rho_\chi)|e^{\frac{t_Q}{2}}$.\\ Using the triangular inequality, we deduce \begin{align*}
    \left|\sum_{q\in \mathcal{D}_Q}\sum_{\chi \in \mathcal{G}_q}\psi_\eta(e^{t_Q},\chi) \right| &\geq \left |  \sum_{q\in \mathcal{D}_Q}\sum_{\chi \in \mathcal{G}_q} \underset{\substack{\frak{I}(\rho_\chi)\leq D_\eta,\\ \rho_\chi \notin \Gamma }}{{\sum}}\M(\rho_\chi)e^{it_q\mathfrak{I}(\rho_\chi)} \right|e^{\frac{t_Q}{2}}-  \sum_{q\in \mathcal{D}_Q}\sum_{\chi \in \mathcal{G}_q} \sum_{\rho_\chi \in \Gamma}\left| \M (\rho_\chi)  \right|e^{\frac{t_Q}{2}}\\
    &-  \sum_{q\in \mathcal{D}_Q}\sum_{\chi \in \mathcal{G}_q} \sum_{\frak{I}(\rho_\chi)>D_\eta}\left| \M(\rho_\chi)  \right|e^{\frac{t_Q}{2}}- O_\eta\left( \sum_{q\in \mathcal{D}_Q}\sum_{\chi \in \mathcal{G}_q}\log q\right)  \\
    &\gg_\eta \left(\sum_{q\in \mathcal{D}_Q}\Phi_q\right) e^{\frac{t_Q}{2}}\log Q \, ,
\end{align*}
where we used \eqref{Restmaj}, \eqref{minpr} and $\sum_{ \chi \in \mathcal{G}_q} \sum_{\rho_\chi \in \Gamma} \M (\rho_\chi) \ll_\eta q^\alpha $.\\
To end the proof, it suffices to take $x_Q:=e^{t_Q}$ and use \eqref{ Riemm-VM2} to see that $$\log \log x_Q = \log t_Q \asymp f(Q) \sum_{q\in \mathcal{D}_Q } N(D_\eta,\mathcal{G}_q)\asymp_\eta \sum_{q\in \mathcal{D}_Q } \Phi_q f(Q) \log Q \, .$$
\end{proof}
We are now ready to state and prove the main result of this section.
\begin{prop}\label{GRH-True}
     Let $\eta \in \mathcal{S}$. For all $Q\geq3$ let $\mathcal{D}_Q $ be a non-empty subset of $ [Q,2Q]\cap \N$, let $f:\N\to \R_{>C_\eta}$, where $C_\eta$ is defined in \eqref{C_eta}, and let $g:\N\to \R_{>0}$ such that uniformly for $Q\geq 3$ and $q\in \mathcal{D}_Q$ we have $$1\ll g(Q)\leq \log Q \text{ and } g(q) \asymp g(Q)\, .$$ If $GRH_\eta$ holds, then for all $Q$ large enough there exists $x_Q$ such that
     \begin{align}
         \log \log(x_Q) &\asymp_\eta \sum_{q\in \mathcal{D}_Q}\phi(q) g(Q) f(Q) ,\\
         \sum_{q\in \mathcal{D}_Q}V_\eta(x_Q;q) \gg_\eta& \frac{\sum_{q\in \mathcal{D}_Q}\phi(q)}{Q} x_Q g(Q) \log Q \, .
     \end{align}
     The same statement holds for $G_\eta(x_q; q)$ in place of $V_\eta(x_q;q)$.
\end{prop}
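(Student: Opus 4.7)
The strategy is to invoke Lemma~\ref{MainLemma} with a carefully calibrated family of character subsets $\mathcal{G}_q\subset \mathcal{X}_q$, and then convert the resulting lower bound on the character sum into one on the variance via Cauchy--Schwarz and the Parseval identity~\eqref{Parseval}. For each $q\in\mathcal{D}_Q$, take the set $\mathcal{F}_q$ from Lemma~\ref{subset} (which has $|\mathcal{F}_q|\geq \phi(q)/2$ and $\log q_\chi>\tfrac{3}{4}\log q$ for $\chi\in\mathcal{F}_q$) and pick any subset $\mathcal{G}_q\subset \mathcal{F}_q$ of size
\[
\Phi_q:=|\mathcal{G}_q|=\Bigl\lceil \frac{\phi(q)\,g(Q)}{2\log Q}\Bigr\rceil.
\]
The hypothesis $g(Q)\leq \log Q$ ensures $\Phi_q\leq|\mathcal{F}_q|$ for $Q$ large; the hypothesis $g(Q)\gg 1$ together with $\log q\asymp \log Q$ gives $\Phi_q\gg \phi(q)/\log q$; hence Lemma~\ref{MainLemma} applies. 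Writing $S_\Phi:=\sum_{q\in\mathcal{D}_Q}\Phi_q$ and $S_\phi:=\sum_{q\in\mathcal{D}_Q}\phi(q)$, the choice of $\Phi_q$ gives $S_\Phi\log Q\asymp S_\phi\,g(Q)$, so Lemma~\ref{MainLemma} produces $x_Q$ with $\log\log x_Q\asymp_\eta S_\Phi\,f(Q)\log Q\asymp_\eta S_\phi\,g(Q)f(Q)$, matching the first stated rate.

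The second conclusion of Lemma~\ref{MainLemma} combined with Cauchy--Schwarz yields
\[
\sum_{q\in\mathcal{D}_Q}\sum_{\chi\in\mathcal{G}_q}|\psi_\eta(x_Q,\chi)|^2 \;\geq\; \frac{\bigl|\sum_{q\in\mathcal{D}_Q}\sum_{\chi\in\mathcal{G}_q}\psi_\eta(x_Q,\chi)\bigr|^2}{S_\Phi}\;\gg_\eta\; S_\Phi\,x_Q(\log Q)^2.
\]
Since the characters in $\mathcal{G}_q$ are non-principal and $\phi(q)\leq 2Q$ on $\mathcal{D}_Q$, the Parseval identity~\eqref{Parseval} then gives
\[
\sum_{q\in\mathcal{D}_Q}V_\eta(x_Q;q)\;\geq\; \sum_{q\in\mathcal{D}_Q}\frac{1}{\phi(q)}\sum_{\chi\in\mathcal{G}_q}|\psi_\eta(x_Q,\chi)|^2\;\gg\; \frac{S_\Phi\,x_Q(\log Q)^2}{Q}\;\asymp_\eta\; \frac{S_\phi}{Q}\,x_Q\,g(Q)\log Q,
\]
the last step again using $S_\Phi\log Q\asymp S_\phi\,g(Q)$, which establishes the second assertion.

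For the $G_\eta$ variant, Lemma~\ref{psi-theta}(2) gives $\theta_\eta(x_Q,\chi)=\psi_\eta(x_Q,\chi)+O_\eta(\sqrt{x_Q})$, so the lower bound on $\bigl|\sum_q\sum_{\chi\in\mathcal{G}_q}\psi_\eta(x_Q,\chi)\bigr|$ transfers to $\theta_\eta$ up to an additive loss $O_\eta(S_\Phi\sqrt{x_Q})$, which is absorbed by the main order $S_\Phi\sqrt{x_Q}\log Q$ once $Q$ is large. The same Cauchy--Schwarz and Parseval steps---simply discarding the non-negative principal-character contribution in the Parseval expansion of $G_\eta$---then produce the analogous bound. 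The main subtlety throughout is the triple balancing of $\Phi_q$: it must be $\gg \phi(q)/\log q$ (to invoke Lemma~\ref{MainLemma}), bounded by $|\mathcal{F}_q|$ (so $\mathcal{G}_q$ actually exists), and tuned so that $S_\Phi\log Q\asymp S_\phi\,g(Q)$; the hypotheses $1\ll g(Q)\leq \log Q$ are precisely what permit this calibration.
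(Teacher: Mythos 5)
Your proof is correct and follows essentially the same route as the paper: choose $\mathcal{G}_q\subset\mathcal{F}_q$ of size $\Phi_q\asymp \phi(q)g(Q)/\log Q$, invoke Lemma~\ref{MainLemma}, then pass to $V_\eta$ via Cauchy--Schwarz and Parseval with $\phi(q)\leq 2Q$, and handle $G_\eta$ via Lemma~\ref{psi-theta}(2). The only differences (ceiling rather than floor in defining $\Phi_q$, and writing the explicit constant $1/2$) are cosmetic.
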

\begin{proof}
Let $B>0$ and $\mathcal{F}_q$ (for all $q>B$) as in Lemma \ref{subset}, and let $\mathcal{G}_q \subset \mathcal{F}_q$ of cardinal $$\Phi_q:=\left[\frac{g(q) |\mathcal{F}_q|}{\log q}\right]\asymp \frac{g(q) \phi(q)}{\log q}\, .$$
Thus, uniformly for $Q>B$ we have $$\sum_{q\in \mathcal{D}_Q} \Phi_q\asymp \frac{g(Q)}{\log Q}\sum_{q\in \mathcal{D}_Q} \phi(q)$$
By Lemma~\ref{subset} the sets $\mathcal{G}_Q$ for $Q>B$ satisfy the assumptions of Lemma~\ref{MainLemma}. For all $Q$ large enough there exists $x_Q$ such that 
\begin{align*}
         \log \log(x_Q) \asymp_\eta \sum_{q\in \mathcal{D}_Q} \Phi_q f(Q)& \log Q \asymp \sum_{q\in \mathcal{D}_Q} \phi(q) g(Q) f(Q) ,\\
         \left|\sum_{q\in \mathcal{D}_Q}\sum_{\chi \in \mathcal{G}_q}\psi_\eta(x_Q,\chi) \right| &\gg_\eta \sum_{q\in \mathcal{D}_Q}\Phi_q\sqrt{x_Q}\log Q \, .
     \end{align*}
By positivity and Cauchy-Schwarz inequality we have for all $Q$ large enough
\begin{align*}
    \sum_{q\in \mathcal{D}_Q} V_\eta (x_Q;q)&\geq \frac{1}{2Q}\sum_{q\in \mathcal{D}_Q}\sum_{\chi \in \mathcal{G}_q}|\psi_\eta(x_Q,\chi)|^2\\
                  &\geq \frac{1}{2Q\sum_{q\in \mathcal{D}_Q}\Phi_q}\left (\sum_{q\in \mathcal{D}_Q}\sum_{\chi \in \mathcal{G}_Q}|\psi_\eta(x_Q,\chi)|\right)^2\\
                  &\gg_\eta \frac{\sum_{q\in \mathcal{D}_Q}\Phi_q}{Q}(\log Q)^2x_Q \\
                  &\gg \frac{\sum_{q\in \mathcal{D}_Q}\phi(q)}{Q} x_Q g(Q) \log Q \, .
    \end{align*}
    To deduce a similar result for $G_\eta(x_q;q)$, we combine Lemmas~\ref{psi-theta}(2) and \ref{MainLemma} to see that for $Q$ large enough we have: $$\left|\sum_{q\in \mathcal{D}_Q}\sum_{\chi \in \mathcal{G}_q}\theta_\eta(x_Q,\chi) \right| \gg_\eta \left(\sum_{q\in \mathcal{D}_Q}\Phi_q\right) \sqrt{x_Q}\log Q \, ,$$
    then we conclude similarly using the Parseval identity~\eqref{Parseval}.
\end{proof}

\section{Proof of our main theorems}\label{proofs}

 \begin{proof}[Proof of Theorems~\ref{Hooley-False} and \ref{princ}]
It is sufficient to prove Theorem \ref{princ} which implies Theorem \ref{Hooley-False}.\\
Note that \eqref{property} implies that if $\log \log x_q \asymp \log \log h^{-1}(q)$ then $h(x_q) \asymp q$. Indeed if $\log \log x_q \leq V \log \log(h^{-1}(q))$, then $$ h(x_q)\leq h\left(\exp\left(\log(h^{-1}(q))^V\right)\right)\ll q\, ,$$
And similarly, if $\log \log (h^{-1}(q)) \leq W \log \log x_q$ then $$q\ll h(x_q)\, .$$

  If $GRH_\eta$ does not hold, then proposition \ref{GRH-False} implies the existence of a positive proportion of moduli $q$ for which we have $\log \log x_q \asymp_\eta \log \log h^{-1}(q)$ and for which $V_\eta(x_q;q)$ satisfies a stronger inequality than those stated in both cases of the Theorem.\\
 We may assume that $GRH_\eta$ holds.
    Let us prove case (1) of the statement, let $g:\N \to \R_{>0}$ be defined by $g(q)=\frac{\log \log(h^{-1}(q))}{q}$. Note that, up to multiplying $h$ by a large (absolute) positive constant, we may assume that $h(\exp(\exp(q \log q))\geq q$ since $$h(\exp(\exp(q \log q))\geq \frac{q \log q}{\log(q \log q)}\gg q \, .$$
    Hence $g(q) \leq \log q$. Note also that $\log \log(h^{-1}(q))\gg q$ hence $g(q) \gg 1$.  We apply Proposition \ref{GRH-True} with $\mathcal{D}_q=\{q\}$ for all $q\geq 3$ (and $q=Q$) and $f:\N \to \R$ given by $f(q)=C_\eta+1$ for all $q\in \N$, where $C_\eta$ is defined in \eqref{C_eta}, which implies that for large $q$, in terms of $\eta$, there exists $x_q$ such that $\log \log x_q \asymp_\eta \frac{\phi(q)}{q} \log \log h^{-1}(q)$ and $$V_\eta(x_q;q)\gg_{\eta} \frac{\phi(q)}{q} x_q g(q) \log(q)\,.$$
    Considering the positive proportion of moduli $q$ such that $\phi(q)>\frac{q}{2}$ we obtain the result.\\
    Let us prove case (2) of the statement, consider $g(q)=\log q$, $\mathcal{D}_q=\{q\}$ for $q\in \N$ and $f(q)=C_\eta \frac{\log \log(h^{-1}(q))}{q\log q}$ for $q\in \N$. As  $$h(\exp(\exp(q \log q)))\leq \frac{q \log q}{\log(q \log q)}< q\, .$$
    Thus, $\frac{\log \log(h^{-1}(q))}{q\log q}>1$. Hence, $f(q)>C_\eta$. We apply Proposition \ref{GRH-True} and we consider the positive proportion of moduli $q$ such that $\phi(q)>\frac{q}{2}$ this implies the result for $V_\eta$. The same proof is valid replacing $V_\eta$ by $G_\eta$.
\end{proof}

\begin{proof}[Proof of Theorems~\ref{Moyenne_simplifié} and \ref{moyenne}] We proceed as in the proof of Theorems \ref{princ} and \ref{Hooley-False}. It is enough to prove Theorem~\ref{moyenne}. Let us prove that $\log \log h^{-1}(2Q) \leq A_0 \log \log h^{-1}(Q)$. It suffices to prove that $$2Q\leq h \left(\exp\left( (\log h^{-1}(Q))^{A_0}\right)\right) \, ,$$ which is a direct consequence of \eqref{minoration_h}. Since $h^{-1}$ is increasing this implies that uniformly for $Q\geq 3$ and $Q<q\leq 2Q$ we have $$\log \log h^{-1}(q)\asymp \log \log h^{-1}(Q)\, .$$
If $GRH_\eta$ does not hold, then let $\delta>0$ and $q_0\geq 3$ given by Proposition \ref{GRH-False} and notice that $h(x)=o\left(x^{\varepsilon}\right)$ with $\varepsilon=\frac{\delta}{2}$. For all $Q\geq 3$ consider $q_Q$ the least multiple of $q_0$ such that $q_Q >Q$. For all $Q\geq q_0$ we have $Q< q_Q\leq 2Q$. By Proposition~\ref{GRH-False} for all $Q$ large enough there exists $x_{Q} \in \left[h^{-1}(q_Q);h^{-1}(q_Q)^{\frac{1}{1-\delta}}\right] \subset \left[h^{-1}(Q);(h^{-1}(2Q))^{\frac{1}{1-\delta}} \right]$ such that $V_\eta(x_Q;q_Q)\geq x_Q^{1+\delta}$. As $\frac{x_Q^\delta}{Q}\gg \frac{h(x_Q)^2}{h(x_Q)} = h(x_Q) \geq Q \gg (\log Q)^2$, by positivity we see that $$\frac{1}{Q} \sum_{Q<q\leq 2 Q} V_\eta(x_Q;q)\geq \frac{1}{Q} V_\eta(x_Q;q_Q) \gg_\eta x_Q (\log Q)^2 \, .$$ 
This implies both cases of the statement.\\
We may thus assume that $GRH_\eta$ holds. 
Let us prove case (1) of the statement. Let $g:\N \to \R_{>0}$ be defined by $g(Q)=\frac{\log \log(h^{-1}(Q))}{Q^2}$. Note that, up to multiplying $h$ by a large (absolute) positive constant, we may assume that $h(\exp(\exp(Q^2 \log Q))\geq Q$ since $$h(\exp(\exp(Q^2 \log Q))\geq \sqrt{\frac{Q^2 \log Q}{\log(Q^2 \log Q)}}\gg Q \, .$$
Hence $g(Q) \leq \log Q$. Note also that $\log \log(h^{-1}(Q))\gg Q^2$, hence $g(Q) \gg 1$. Since uniformly for $Q\geq 3$ and $Q<q\leq 2Q$ we have $\log \log h^{-1}(q)\asymp \log \log h^{-1}(Q)$ we have uniformly for such $Q$ and $q$ $$g(q)\asymp g(Q)\, .$$
We apply Proposition \ref{GRH-True} with $\mathcal{D}_Q=\{Q+1,Q+2,\cdots,2Q\}$ for all $Q\geq 3$  and $f:\N \to \R$ given by $f(Q)=C_\eta+1$ for all $Q\in \N$, which implies that for large $Q$, in terms of $\eta$, there exists $x_Q$ such that $$\log \log x_Q \asymp_\eta \frac{\sum_{Q< q \leq 2Q}\phi(q)}{Q^2} \log \log h^{-1}(Q)$$ and $$\frac{1}{Q}\sum_{Q<q\leq 2Q}V_\eta(x_Q;q)\gg_{\eta} \frac{\sum_{Q<q\leq 2Q}\phi(q)}{Q^2} x_Q g(Q) \log Q\, .$$
By the classical fact (see for instance \cite{Apostol}*{Theorem 3.7}): $$\sum_{n\leq x} \phi(n)=\frac{3}{\pi^2}x^2+O(x\log x)\, ,$$
we see that $$\sum_{Q<q\leq 2Q} \phi(q) \asymp Q^2 \, .$$ Thus, $g(Q)\asymp \frac{\log \log x_Q}{Q^2}$. Case (1) follows.\\
Let us prove case (2) of the statement. Consider $g(Q)=\log Q$, $\mathcal{D}_Q=\{Q+1,Q+2,\cdots, 2Q\}$ for $Q\in \N$ and $f:\N \to \R$ given by $f(Q):=C_\eta \frac{\log \log(h^{-1}(Q))}{Q^2\log Q}$ for $Q\in \N$. As  $$h(\exp(\exp(Q^2 \log Q))\leq \sqrt{\frac{Q^2 \log Q}{\log(Q^2 \log Q)}}< Q\, .$$
Thus, $\frac{\log \log(h^{-1}(Q))}{Q^2\log Q}>1$. Hence, $f(Q)>C_\eta$. By Proposition \ref{GRH-True} and since $\sum_{Q<q\leq 2Q} \phi(q) \asymp Q^2$ we obtain the result for $V_\eta$. The same proof is valid replacing $V_\eta$ by $G_\eta$. 
\end{proof}

\section*{Acknowledgements}

I would like to express my sincere gratitude to my supervisors, Daniel Fiorilli, who introduced me to this delightful topic and provided insightful comments and suggestions that greatly enhanced the quality of this research, and Florent Jouve, whose numerous discussions significantly contributed to the development of my ideas and who provided continuous encouragement.

\begin{bibdiv}
\begin{biblist}

\bib{Apostol}{book}{
   author={Apostol, Tom M.},
   title={Introduction to analytic number theory},
   series={Undergraduate Texts in Mathematics},
   publisher={Springer-Verlag, New York-Heidelberg},
   date={1976},
   pages={xii+338},
   review={\MR{0434929}},
}

\bib{Mellin}{book}{
    author={Bertrand, Jacqueline},
    author={Bertrand, Pierre},
    author={Ovarlez, Jean-Philippe},
    title={The Mellin Transform},
    book={
    title={The Transforms and Applications Handbook: Second Edition},
    },
    editor={Poularikas, Alexander D.},
    publisher={CRC Press LLC},
    year={2000}
    
}

\bib{BF1}{article}{
  author={de la Bretèche, Régis},
  author={Fiorilli, Daniel},
  title={On a conjecture of Montgomery and Soundararajan},
  journal={Math. Annalen},
   volume={381},
   date={2021},
   number={1},
   pages={575--591}
}

\bib{BF2}{article}{
  author={de la Bretèche, Régis},
  author={Fiorilli, Daniel},
  title={Moments of moments of primes in arithmetic progressions},
  date={2020},
  eprint={https://doi.org/10.1112/plms.12542},
  status={forthcoming, Proc. Lond. Math. Soc.},
}

\bib{BFJ}{article}{
  author={de la Bretèche, Régis},
  author={Fiorilli, Daniel},
   author={Jouve, Florent},
  title={Moments in the Chebotarev Density Theorem: general class functions},
  date={2022},
 eprint={arXiv:2301.12899},
}

\bib{Density}{article}{
   author={Chandee, Vorrapan},
   author={Lee, Yoonbok},
   author={Liu, Sheng-Chi},
   author={Radziwi\l \l , Maksym},
   title={Simple zeros of primitive Dirichlet $L$-functions and the
   asymptotic large sieve},
   journal={Q. J. Math.},
   volume={65},
   date={2014},
   number={1},
   pages={63--87},
   issn={0033-5606},
   review={\MR{3179650}},
   doi={10.1093/qmath/hat008},
}

\bib{Fiorilli}{article}{
   author={Fiorilli, Daniel},
   title={The distribution of the variance of primes in arithmetic
   progressions},
   journal={Int. Math. Res. Not. IMRN},
   date={2015},
   number={12},
   pages={4421--4448},
   issn={1073-7928},
   review={\MR{3356760}},
   doi={10.1093/imrn/rnu074},
}

\bib{FiMa}{article}{
   author={Fiorilli, Daniel},
   author={Martin, Greg},
   title={Disproving Hooley's conjecture},
   journal={J. Eur. Math. Soc. (JEMS)},
   volume={25},
   date={2023},
   number={12},
   pages={4791--4812},
   issn={1435-9855},
   review={\MR{4662302}},
   doi={10.4171/jems/1291},
}

\bib{Friedlander}{article}{
   author={Friedlander, J. B.},
   author={Goldston, D. A.},
   title={Variance of distribution of primes in residue classes},
   journal={Quart. J. Math. Oxford Ser. (2)},
   volume={47},
   date={1996},
   number={187},
   pages={313--336},
   issn={0033-5606},
   review={\MR{1412558}},
   doi={10.1093/qmath/47.3.313},
}

\bib{Harper}{article}{
   author={Harper, Adam J.},
   author={Soundararajan, Kannan},
   title={Lower bounds for the variance of sequences in arithmetic
   progressions: primes and divisor functions},
   journal={Q. J. Math.},
   volume={68},
   date={2017},
   number={1},
   pages={97--123},
   issn={0033-5606},
   review={\MR{3658285}},
   doi={10.1093/qmath/haw005},
}

\bib{Hooley1}{article}{
   author={Hooley, Christopher},
   title={On the Barban-Davenport-Halberstam theorem. I},
   journal={J. Reine Angew. Math.},
   volume={274/275},
   date={1975},
   pages={206--223},
   issn={0075-4102},
   review={\MR{0382202}},
   doi={10.1515/crll.1975.274-275.206},
}

\bib{Hooley2}{article}{
   author={Hooley, C.},
   title={On the Barban-Davenport-Halberstam theorem. II},
   journal={J. London Math. Soc. (2)},
   volume={9},
   date={1974/75},
   pages={625--636},
   issn={0024-6107},
   review={\MR{0382203}},
   doi={10.1112/jlms/s2-9.4.625},
}

\bib{Hooley3}{article}{
   author={Hooley, C.},
   title={On the Barban-Davenport-Halberstam theorem. III},
   journal={J. London Math. Soc. (2)},
   volume={10},
   date={1975},
   pages={249--256},
   issn={0024-6107},
   review={\MR{0382204}},
   doi={10.1112/jlms/s2-10.2.249},
}

\bib{Hooley4}{article}{
   author={Hooley, C.},
   title={On the Barban-Davenport-Halberstam theorem. IV},
   journal={J. London Math. Soc. (2)},
   volume={11},
   date={1975},
   number={4},
   pages={399--407},
   issn={0024-6107},
   review={\MR{0382205}},
   doi={10.1112/jlms/s2-11.4.399},
}

\bib{Hooley5}{article}{
   author={Hooley, C.},
   title={On the Barban-Davenport-Halberstam theorem. V},
   journal={Proc. London Math. Soc. (3)},
   volume={33},
   date={1976},
   number={3},
   pages={535--548},
   issn={0024-6115},
   review={\MR{0506067}},
   doi={10.1112/plms/s3-33.3.535},
}

\bib{Hooley6}{article}{
   author={Hooley, C.},
   title={On the Barban-Davenport-Halberstam theorem. VI},
   journal={J. London Math. Soc. (2)},
   volume={13},
   date={1976},
   number={1},
   pages={57--64},
   issn={0024-6107},
   review={\MR{0506077}},
   doi={10.1112/jlms/s2-13.1.57},
}
\bib{Hooley}{article}{
   author={Hooley, C.},
   title={On theorems of Barban-Davenport-Halberstam type},
   conference={
      title={Number theory for the millennium, II},
      address={Urbana, IL},
      date={2000},
   },
   book={
      publisher={A K Peters, Natick, MA},
   },
   isbn={1-56881-146-2},
   date={2002},
   pages={195--228},
   review={\MR{1956252}},
}

\bib{KP}{article}{
   author={Kaczorowski, J.},
   author={Pintz, J.},
   title={Oscillatory properties of arithmetical functions. I},
   journal={Acta Math. Hungar.},
   volume={48},
   date={1986},
   number={1-2},
   pages={173--185},
   issn={0236-5294},
   review={\MR{0858395}},
   doi={10.1007/BF01949062},
}

\bib{KeRud}{article}{
   author={Keating, Jonathan P.},
   author={Rudnick, Ze\'{e}v},
   title={The variance of the number of prime polynomials in short intervals
   and in residue classes},
   journal={Int. Math. Res. Not. IMRN},
   date={2014},
   number={1},
   pages={259--288},
   issn={1073-7928},
   review={\MR{3158533}},
   doi={10.1093/imrn/rns220},
}

\bib{Liu1}{article}{
   author={Liu, H.-Q.},
   title={Lower bounds for sums of Barban-Davenport-Halberstam type},
   journal={J. Reine Angew. Math.},
   volume={438},
   date={1993},
   pages={163--174},
   issn={0075-4102},
   review={\MR{1215652}},
   doi={10.1515/crll.1993.438.163},
}
\bib{Liu2}{article}{
   author={Liu, H. Q.},
   title={Lower bounds for sums of Barban-Davenport-Halberstam type
   (supplement)},
   journal={Manuscripta Math.},
   volume={87},
   date={1995},
   number={2},
   pages={159--166},
   issn={0025-2611},
   review={\MR{1334938}},
   doi={10.1007/BF02570468},
}

\bib{Topics}{book}{
   author={Montgomery, Hugh L.},
   title={Topics in multiplicative number theory},
   series={Lecture Notes in Mathematics},
   volume={Vol. 227},
   publisher={Springer-Verlag, Berlin-New York},
   date={1971},
   pages={ix+178},
   review={\MR{0337847}},
}

\bib{MV}{book}{
   author={Montgomery, Hugh L.},
   author={Vaughan, Robert C.},
   title={Multiplicative number theory. I. Classical theory},
   series={Cambridge Studies in Advanced Mathematics},
   volume={97},
   publisher={Cambridge University Press, Cambridge},
   date={2007},
   pages={xviii+552},
   isbn={978-0-521-84903-6},
   isbn={0-521-84903-9},
   review={\MR{2378655}},
}

\bib{Perelli}{article}{
   author={Perelli, A.},
   title={The $L^1$ norm of certain exponential sums in number theory: a
   survey},
   note={Number theory, II (Rome, 1995)},
   journal={Rend. Sem. Mat. Univ. Politec. Torino},
   volume={53},
   date={1995},
   number={4},
   pages={405--418},
   issn={0373-1243},
   review={\MR{1452395}},
}

\end{biblist}
\end{bibdiv}

\end{document}